\documentclass[12pt,epsfig,amsfonts]{amsart}
\setcounter{tocdepth}{1}
\usepackage[utf8]{inputenc}
\usepackage{amsmath,amsthm,amssymb,
mathrsfs,amscd,epsfig,color,bbm}
\usepackage{url}
\usepackage{ulem}
\usepackage{graphicx}

\setlength{\topmargin}{0.13in} \setlength{\textheight}{8.85in}
\setlength{\textwidth}{5.8in} \setlength{\oddsidemargin}{0.2in}
\setlength{\evensidemargin}{0.2in} 
\setlength{\unitlength}{1cm}

\newcommand{\cP}{\mathcal{P}}

\newtheorem{prop}{Proposition}[section]
\newtheorem*{maintheorem}{Main Theorem}
\newtheorem{thm}[prop]{Theorem}

\newtheorem{lemma}[prop]{Lemma}

\newtheorem{sublemma}[prop]{Sublemma}

\theoremstyle{remark}

\numberwithin{equation}{section}

\title[Polynomial rate of mixing for the heterochaos baker maps]{Polynomial rate of mixing for the heterochaos baker maps with 
mostly neutral center}
\author{Hiroki Takahasi and Masato Tsujii}
\date{\today}

\address{Department of Mathematics,
Keio University, Yokohama,
223-8522, JAPAN} 
\email{hiroki@math.keio.ac.jp}
\address{Department of Mathematics, Kyushu University, Fukuoka, 819-0395, JAPAN} 
\email{tsujii@math.kyushu-u.ac.jp}

\subjclass[2020]{37C05, 37C40, 37D25, 37D30, 37D35}
\thanks{{\it Keywords}: 
piecewise affine map;  mixing; decay of correlations}

\begin{document}
\begin{abstract}
For the heterochaos baker maps  whose central direction is mostly neutral, we prove that correlations for H\"older continuous functions decay at an optimal polynomial rate of order $n^{-3/2}$. 
Our method of proof relies on a description of the action of a reduced Perron-Frobenius operator by means of a comparison to the symmetric simple random walk with an absorbing wall, aka `gambler's ruin problem'.
\end{abstract}
\maketitle

\section{Introduction}
The heterochaos baker maps \cite{STY21,S60,TY23} are piecewise affine maps on the square $[0,1]^2$ or the cube $[0,1]^3$. 
Like Smale's horseshoe map \cite{Sma98} which serves as a model for uniformly hyperbolic systems, 
the heterochaos baker maps are one of the simplest models for partially hyperbolic systems. Despite the complexity of their dynamics, the simplicity of their definitions enables a rather precise description of their dynamical properties.

Decay of correlations is a key metric for gauging the level of chaos. Fast decay rates indicate that the associated time series become asymptotically independent. Exponential decay of correlations is usually obtained as a consequence of some hyperbolicity of the underlying system \cite{You98}. 
Typical examples of systems
with subexponential decay of correlations are maps with neutral fixed points (see e.g., \cite{Gou04,LSV99,PomMan80,Sar02,You99}), and the mechanism that slows down decay rates is essentially one-dimensional.
To shed lights on higher dimensional mechanisms leading to subexponential decay of correlations,
new concrete examples 
are required.

For the heterochaos baker maps
with mostly expanding or contracting center,
exponential decay of correlations was established in \cite{T23}. 
The aim of this paper is to establish subexponential decay of correlations for the maps with mostly neutral center. 
Below we introduce the heterochaos baker maps, and state our main result. 

\subsection{The heterochaos baker maps}\label{hetero-def}
Let $M\geq2$ be an integer. We define the hetrochaos baker maps $f_a\colon[0,1]^2\to[0,1]^2$ and $f_{a,b}\colon[0,1]^3\to[0,1]^3$, where both parameters $a$, $b$ range over the interval $(0,\frac{1}{M})$.
We write $(x_u,x_c)$ and $(x_u, x_c, x_s)$ for the coordinates on $[0,1]^2$ and $[0,1]^3$ respectively.

For $a\in (0,\frac{1}{M})$, define a piecewise affine expanding map $\tau_a\colon[0,1]\to[0,1]$ by
\[
\tau_a(x_u)=\begin{cases}\vspace{1mm}
\displaystyle{\frac{x_u-(k-1)a}{a}}&\text{ on }[(k-1)a,ka),\ k\in\{1,\ldots,M\},\\ \displaystyle{\frac{x_u-Ma}{1-Ma}}&\text{ on }
[Ma,1].
\end{cases}
\]
 Consider a set $D$ of $2M$ symbols
\[
D=\{\alpha_1,\ldots,\alpha_M\}\cup\{\beta_1,\ldots,\beta_M\}.
\]
For each symbol $\gamma\in D$, define a rectangular domain $\Omega_\gamma^+$ in $[0,1]^2$ by
\[\Omega_{\alpha_k}^+=\left[(k-1)a,ka\right)\times
\left[0,1\right]\text{ for }k\in\{1,\ldots,M\}
\]
and
\[\Omega_{\beta_k}^+=\begin{cases}\vspace{1mm}
\left[Ma,1\right]\times
\displaystyle{\left[\frac{k-1}{M },\frac{k }{M }\right)}&
\text{ for }k\in\{1,\ldots,M-1\},\\
\left[Ma,1\right]\times
\displaystyle{\left[\frac{M-1 }{M},1\right]}&\text{ for }k=M.
\end{cases}\]
We now define $f_a\colon[0,1]^2\to [0,1]^2$ by
\[\begin{split}
  f_a(x_u,x_c)=
  \begin{cases}
    \displaystyle{\left(\tau_a(x_u),\frac{x_c}{M}+\frac{k-1}{M}\right)}&\text{ on }\Omega_{\alpha_k}^+,\ k\in\{1,\ldots,M\},\\
   \displaystyle{\left (\tau_a(x_u),Mx_c-k+1\right)}&\text{ on }\Omega_{\beta_k}^+,\ k\in\{1,\ldots,M\}.
   \end{cases}
\end{split}\]
The map $f_a$ in the case $M=2$ is illustrated in \textsc{Figure}~\ref{hetero2}.

The maps $f_{a,b}$ are extensions of $f_{a}$ as skew products with contracting affine maps.
Put $\Omega_\gamma=\Omega_\gamma^+\times
[0,1]$ for $\gamma\in D$. We define $
f_{a,b}\colon[0,1]^3\to [0,1]^3$
 by
\[\begin{split}
  f_{a,b}(x_u,x_c,x_s)=
  \begin{cases}
    (f_{a}(x_u,x_c),
    (1-Mb)x_s)&\text{ on }\Omega_{\alpha_k},\ k\in\{1,\ldots,M\}
    ,\\
   \displaystyle{\left (f_{a}(x_u,x_c),bx_s+1+b(k-M-1)\right)}&\text{ on }\Omega_{\beta_k},\ k\in\{1,\ldots,M\}.
   \end{cases}
\end{split}\]
The map $f_{a,b}$ in the case $M=2$ is illustrated in \textsc{Figure}~\ref{hetero3}. 
\begin{figure}
\begin{center}
\includegraphics[height=4cm,width=12cm]
{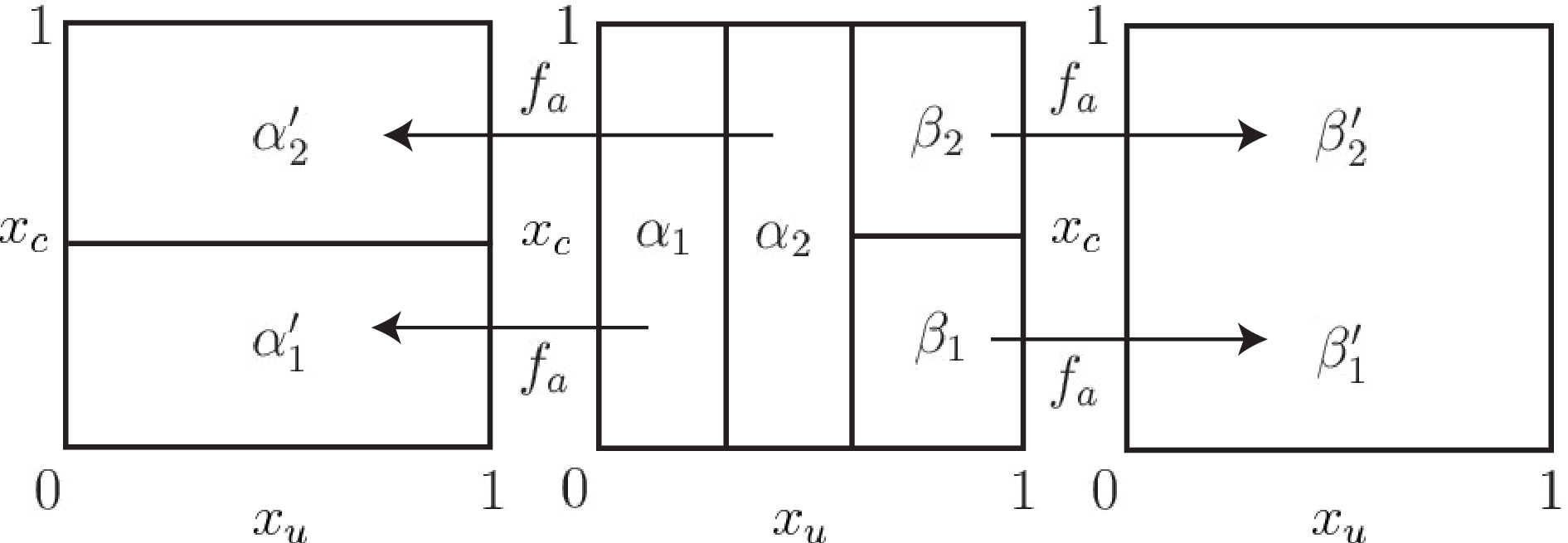}
\caption
{The heterochaos baker map $f_{a}$ with $M=2$. For each $\gamma\in D$,
the domain $\Omega_\gamma^+$  and its image are labeled with $\gamma$ and $\gamma'$ respectively.}\label{hetero2}
\end{center}
\end{figure}
 \begin{figure}
\begin{center}
\includegraphics[height=5cm,width=13cm]
{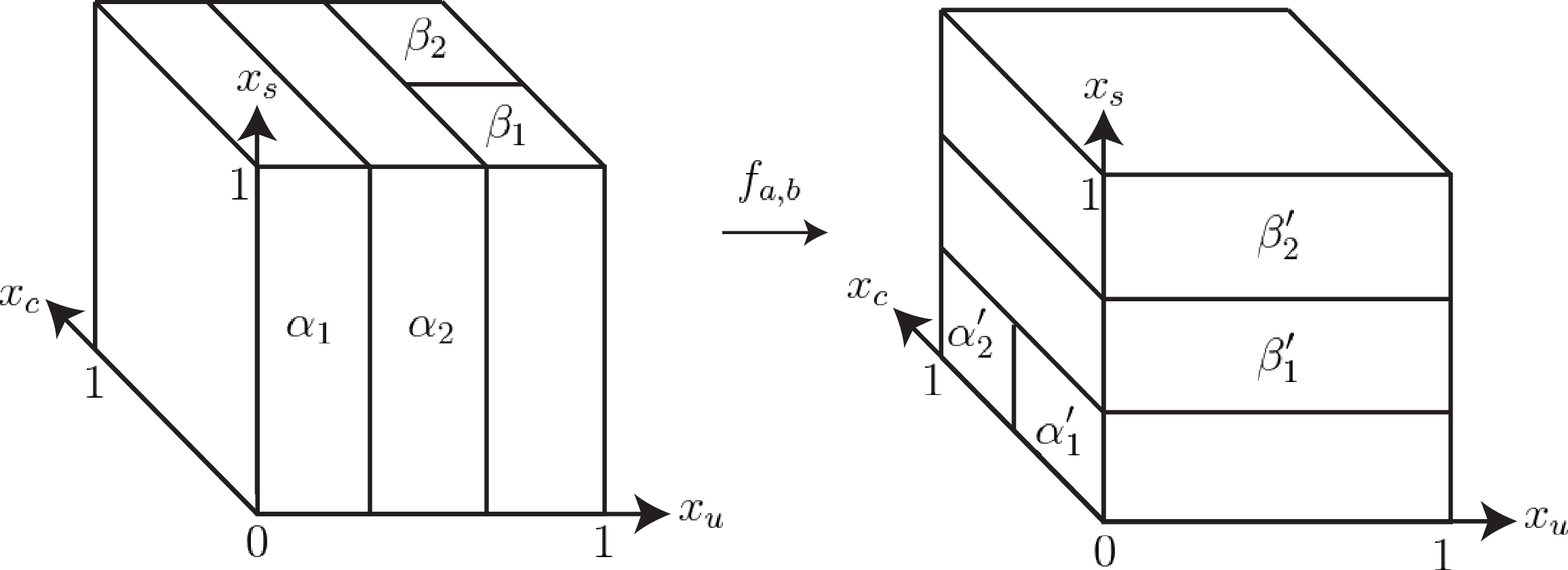}
\caption
{The heterochaos baker map $f_{a,b}$ with $M=2$. For each $\gamma\in D$,
the domain $\Omega_\gamma$  and its image are labeled with $\gamma$ and $\gamma'$ respectively. }\label{hetero3}
\end{center}
\end{figure}

The maps $f_a$ and $f_{a,b}$ with $M=2$, $a=\frac{1}{3}$, $b=\frac{1}{6}$ 
were introduced in \cite{STY21}, and the above generalized definitions were introduced in \cite{TY23}.  In \cite{STY22}, some variants of $f_{a,b}$ were considered in the context of fractal geometry and homoclinic bifurcations of three dimensional diffeomorphisms. Iterated function systems intimately related to $f_{a,b}$ was considered in \cite{HK22}.
  In  \cite{DG12},
another model of partially hyperbolic systems was introduced and investigated. 
For a comprehensive review on the dynamics of the heterochaos baker maps and more references, see \cite{S60}.

 The dynamics of $f=f_{a,b}$ is partially hyperbolic in the following sense.
Under the iteration of $f$,
the $x_u$-direction is expanding and the $x_s$-direction is contracting, while 
the $x_c$-direction is central:
contracting by factor $\frac{1}{M}$ on $\bigcup_{k=1}^M\Omega_{\alpha_k}$ and expanding by factor $M$ on $\bigcup_{k=1}^M\Omega_{\beta_k}$.
Since $f$ is a skew product over the one-dimensional map $\tau_a$, 
the ergodicity of $\tau_a$ with respect to the Lebesgue measure on $[0,1]$ implies
 \[
 \lim_{n\to\infty}\frac{1}{n}\#\left\{0\le i<n\colon f^i(x)\in\bigcup_{k=1}^M\Omega_{\alpha_k}\right\}=aM 
  \text{ for Lebesgue a.e. $x\in[0,1]^3$.}
  \]  If $a\in(0,\frac{1}{2M})$ (resp. $a\in(\frac{1}{2M},\frac{1}{M})$), then the $x_c$-direction is non-uniformly expanding (resp. contracting) for Lebesgue almost every initial point.
  We classify $f$ into three types according to the dynamics in the $x_c$-direction:
\begin{itemize}
\item $a\in(0,\frac{1}{2M})$ (mostly expanding center),

 \item $a\in(\frac{1}{2M},\frac{1}{M})$
 (mostly contracting center),
 \item $a=\frac{1}{2M}$ (mostly neutral center).
 \end{itemize}
 The parameter $a=\frac{1}{2M}$ is a bifurcation point at which the `central Lyapunov exponent' crosses zero.
 Bifurcations of similar types that involve a Lyapunov exponent crossing zero were considered in physics literature \cite{AAN98,OS94,Pik84,YF84}, 
 and recently in \cite{HK22,TZ} with mathematically rigorous treatments.


\subsection{Statement of the result}
In what follows we assume that $f_{a,b}$ preserves the Lebesgue measure on $[0,1]^3$, denoted by $m$. This assumption is equivalent to the condition $a+b=\frac{1}{M}$ (See \cite[Lemma~3.14]{S60}). 
For a pair $(\varphi,\psi)$ of functions in $L^2([0,1]^3)$, 
define their correlation 
by 
\[
\mathrm{Cor}(\varphi,\psi)=\left|\int \varphi \psi {\rm d}m -\int \varphi {\rm d}m  \int \psi {\rm d}m\right|.
\]
Note that $\mathrm{Cor}(\varphi,\psi)=0$ if and only if $\varphi$ and $\psi$ are independent with respect to $m$.
Since $(f,m)$ is mixing \cite{S60,T23},
for any pair $(\varphi,\psi)$ of  $L^2$ functions we have
$\mathrm{Cor}(\varphi,\psi\circ f^n)\to0$ as $n\to\infty$.
Of interest to us is how fast
$\mathrm{Cor}(\varphi,\psi\circ f^n)$ decays as $n\to\infty$.
For a general pair $(\varphi,\psi)$ of $L^2$ functions, the decay of their correlation can become arbitrarily slow. To obtain a reasonable result, we 
assume that $\varphi$ and $\psi$ are H\"older continuous. 
For $\theta\in(0,1]$ and $d=1,2,3$ let $C^\theta([0,1]^d)$ denote the space of H\"older continuous functions on $[0,1]^d$ with exponent $\theta$ equipped with the H\"older norm $\|\varphi\|_{C^\theta}=\|\varphi\|_{L^\infty} + |\varphi|_{C^\theta}$, where
\[
{\|\varphi\|_{L^\infty}=\sup_{x\in[0,1]^d}|\varphi(x)|} \ \text{ and }\ 
|\varphi|_{C^\theta}=
\sup_{\substack{x,y\in[0,1]^d\\x\neq y }} \frac{|\varphi(x)-\varphi(y)|}{|x-y|^\theta},
\]
and
 $|x-y|$ denotes the Euclidean distance between $x$ and $y$ in $[0,1]^d$.

  Based on a numerical experiment, it was conjectured in \cite{S60} that the decay rate of  correlations 
  for H\"older continuous functions
is of polynomial order $n^{-3/2}$, and that this rate is optimal.
 The following theorem settles this conjecture. 

\begin{maintheorem}\label{thm-1}
For any $\theta\in (0,1]$ there exists $C>0$ such that for any pair $(\varphi,\psi)$ of functions in $C^\theta([0,1]^3)$ and
the map $f=f_{\frac{1}{2M},\frac{1}{2M}}$ with mostly neutral center, 
we have 
\begin{equation}\label{eq:upperbound}
\mathrm{Cor}(\varphi,\psi\circ f^n)\le C  \|\varphi\|_{C^\theta} \|\psi\|_{C^\theta} n^{-3/2 }\quad \text{for all $n\ge 1$}.
\end{equation}
The exponent $-3/2$ is optimal, in that
there exist a pair $(\varphi,\psi)$ of functions in $C^\theta([0,1]^3)$ and a constant $C(\varphi,\psi)>0$ such that we have 
\begin{equation}\label{eq:lowerbound}
\mathrm{Cor}(\varphi,\psi\circ f^n) \ge C(\varphi,\psi) n^{-3/2}\quad \text{for all $n\ge 1$.}
\end{equation}
\end{maintheorem}

The above Main~Theorem complements \cite[Theorem~B]{T23} that established
the exponential decay of correlations for H\"older continuous functions for maps with mostly expanding or contracting center. 

\begin{thm}[\cite{T23}, Theorem~B]\label{thm-exp-mix}
If $a\in(0,\frac{1}{M})\setminus\{\frac{1}{2M}\}$, 
then for any $\theta\in(0,1]$ there exists $\lambda\in(0,1)$ such that
 any pair $(\varphi,\psi)$ of functions in $C^\theta([0,1]^3)$, there exists $C=C(\varphi,\psi)>0$ such that for $f=f_{a,\frac{1}{M}-a}$ we have
\[
{\rm Cor}(\varphi,\psi\circ f^n)\leq C\lambda^n\quad \text{ for all $n\geq1$.}
\]
\end{thm}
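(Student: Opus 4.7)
The plan is to exploit the skew-product structure of $f = f_{a, 1/M - a}$ over the one-dimensional expanding map $\tau_a$ to reduce the three-dimensional problem to an essentially one-dimensional spectral problem in the central coordinate $x_c$. The stable direction $x_s$ is uniformly contracting---by factor $Ma$ on $\bigcup_k \Omega_{\alpha_k}$ and by $b = 1/M - a$ on $\bigcup_k \Omega_{\beta_k}$---so a standard disintegration along $x_s$ lets me replace $\psi \circ f^n$ by its average over local stable fibers, at an exponentially small cost in the H\"older norm. What remains is to control correlations for the two-dimensional map $f_a$.

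For $f_a$, I would encode orbits symbolically by their itinerary in $D^{\mathbb{N}}$. Conditionally on the itinerary $\gamma_0 \gamma_1 \cdots$, the dynamics in $x_c$ is affine: contraction by $1/M$ on symbols $\alpha_k$, expansion by $M$ (modulo one) on symbols $\beta_k$. Since each $\alpha_k$ has Lebesgue weight $a$ and each $\beta_k$ has weight $b$, the central Lyapunov exponent equals $\chi_c = (1 - 2aM)\log M$, which is nonzero precisely when $a \neq 1/(2M)$: positive in the mostly expanding case ($a < 1/(2M)$) and negative in the mostly contracting case ($a > 1/(2M)$). The motion in $x_c$ is thus a biased random walk on a multiplicative scale, and the hypothesis $a \neq 1/(2M)$ means the drift is definite.

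The heart of the proof is a spectral gap for a reduced transfer operator $\mathcal{L}$ associated to $f_a$. In the mostly expanding case, I would use large deviations from the positive drift $\chi_c$ to show that hyperbolic times in the center have exponentially small tails, then invoke Young's tower machinery \cite{You98} to conclude exponential mixing. In the mostly contracting case, I would treat the fiber dynamics in $x_c$ as a random iterated function system of affine maps whose contracting branches win on average, and establish a Lasota-Yorke inequality of the form $\|\mathcal{L}^n \varphi\|_{C^\theta} \le A \rho^n \|\varphi\|_{C^\theta} + B \|\varphi\|_{L^1}$ with $\rho < 1$ coming from the negative drift; the Hennion-Ionescu-Tulcea theorem then yields a spectral gap $\lambda \in (\rho, 1)$ for $\mathcal{L}$ on $C^\theta$. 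In either case, the bound $\mathrm{Cor}(\varphi, \psi \circ f^n) \le C \lambda^n \|\varphi\|_{C^\theta} \|\psi\|_{C^\theta}$ follows by duality between $\mathcal{L}^n \varphi$ and $\psi$.

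The principal obstacle is finding a single Banach space that respects the anisotropy: H\"older regularity in all three variables is preserved by neither forward nor backward iteration, so one must either work with an anisotropic norm (H\"older along unstables and centrals, distributional along stables) or carry out two technically distinct arguments for $a < 1/(2M)$ and $a > 1/(2M)$. The deeper point explaining why the result holds for $a \neq 1/(2M)$---and fails to be exponential at $a = 1/(2M)$---is exactly that the drift $\chi_c$ of the central random walk vanishes at the critical parameter, which is the regime treated in the main theorem of the present paper.
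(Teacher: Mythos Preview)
This theorem is not proved in the present paper; it is quoted from \cite{T23} and the only information the paper gives about its proof is the single sentence ``In \cite{T23}, Theorem~\ref{thm-exp-mix} was proved by constructing a tower with exponential tails, and then applying the general result of Young \cite{You98}.'' So there is no proof here against which to check your argument in detail.

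That said, your sketch is broadly compatible with that description in the mostly expanding case $a<\tfrac{1}{2M}$: you propose using the positive central drift to get exponential tails for hyperbolic times and then invoke Young's tower result \cite{You98}, which is exactly the mechanism the paper attributes to \cite{T23}. In the mostly contracting case $a>\tfrac{1}{2M}$ you take a genuinely different route---a direct Lasota--Yorke inequality and Hennion's theorem for a reduced transfer operator---whereas the cited proof apparently builds a tower in that case as well (presumably by exploiting the duality between $f_{a,b}$ and $f_{b,a}$, or by inducing on a set where the central direction has contracted). Your spectral-gap approach is plausible in spirit, but the Lasota--Yorke inequality you write down is not obviously correct as stated: the operator $\mathcal{L}$ for $f_a$ has expanding branches in $x_c$ on the $\beta$-symbols, so $\|\mathcal{L}\varphi\|_{C^\theta}$ is not controlled pointwise by $\|\varphi\|_{C^\theta}$, and one needs either to average over many iterates or to work with a norm adapted to the random contraction before such an inequality can hold. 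This is the kind of issue you flag yourself under ``principal obstacle,'' but it is a real gap, not just a technicality.
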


In \cite{T23}, Theorem~\ref{thm-exp-mix} was proved by constructing a tower with exponential tails, and then applying the general result of Young \cite{You98}. 
For known results on exponential decay of correlations for some partially hyperbolic diffeomorphisms obtained by applying Young's result \cite{You98}, see \cite{C02,C04,Do00} for example. 
In the case $a=\frac{1}{2M}$ we can still construct a tower, 
but the Lebesgue measure cannot be lifted to a finite measure on the tower,
 and so
Young's general result \cite{You99} on subexponential decay of correlations does not apply. 


\subsection{Outline of the proof of the Main Theorem and organization of the paper}
The rest of this paper consists of three sections.  
We only consider the heterochaos baker map on $[0,1]^3$ with mostly neutral center
\[
f=f_{\frac{1}{2M},\frac{1}{2M}}. 
\]
Throughout Sections~$2$ and $3$ we assume $M=2$.
In Section~$2$ we begin by introducing a reduced Perron-Frobenius operator 
that extracts  
the dynamics of $f$ in the $x_c$-direction. 
Interestingly, the action of this operator can be described by the Markov process of the symmetric simple random walk with an absorbing wall, aka `gambler's ruin problem' (see \cite{Fel}). With this description we deduce key polynomial estimates on the action of the reduced operator. In Section~$3$, from these estimates we recover polynomial estimates on the action of the original Perron-Frobenius operator, and prove the Main Theorem in the case $M=2$.
In Section~$4$ we indicate necessary minor modifications to deal with the case $M>2$. 

\subsection{Remarks on $T$, $T^{-1}$ transformations} The heterochaos baker maps are closely related to $T$, $T^{-1}$ transformations \cite{Kal82,Rud88}. 
Let $(X,S,\mu)$ and $(Y,T,\nu)$ be two ergodic probability preserving transformations, with $T$ invertible and let $h\colon X\to\mathbb Z$ be measurable.
The associated $T,T^{-1}$ transformation is the map \[(x,y)\in X\times Y\mapsto (S(x),T^{h(x)}y)\in X\times Y.\] For instance, one may take $X=Y=[0,1]$, $S = \tau$,  $h =
\mathbbm{1}_{[\frac{1}{2},1]} - \mathbbm{1}_{[0,\frac{1}{2})}$, where $\mathbbm{1}_{A}$ denotes the indicator function for a set $A\subset [0,1]$.
The definition does not work with $T(y) = My$ mod $1$ as $T$ is not invertible, but one may be able to subvert this obstacle by using the first coordinate $x_u$ to decide which branch of $T^{-1}$ to use. With this kind of analogy, the case with non-zero drift
($\int h{\rm d}\mu\neq0$) corresponds to the case $a\neq\frac{1}{2M}$ and the case with zero drift $(\int h{\rm d}\mu=0$)
corresponds to the case $a=\frac{1}{2M}$.
The mechanism for exponential mixing of $f_{a}$, $f_{a,\frac{1}{M}-a}$ with $a\neq\frac{1}{2M}$
proved in \cite{T23} is similar to that of generalized $T,T^{-1}$ transformations with non-zero drift \cite[Theorem~4.1(a)]{DDKN22}. For generalized $T$, $T^{-1}$ transformations
with zero drift, polynomial mixing rates faster than $n^{-1}$ were obtained in
\cite[Theorem~2.1]{L-Bor06}, \cite[Theorem~4.7]{DDKN22}.

\section{Polynomial estimates on Reduced Perron-Frobenius operator}
\label{sec2}
The aim of this section is to establish
 polynomial decay estimates on the reduced Perron-Frobenius operator.\smallskip

\noindent{\bf Hypothesis throughout Section~2 and Section~3:  $M=2$.}\smallskip
 
\noindent In Section~\ref{reduced-sec} we introduce this operator $\cP_0$, together with the original Perron-Frobenius operator $\cP$, and state the main polynomial decay estimates in Theorem~\ref{Th2}. In Section~\ref{haar-sec} we introduce a decomposition of the function space on which $\cP_0$ acts, and in Section~\ref{rPF-sec} start to analyze this action. In Section~\ref{gamble} we compare the action of $\cP_0$ with the symmetric simple random walk with an absorbing wall, and deduce key estimates.
 We establish a polynomial lower bound in Section~\ref{low-sec}, and
 complete the proof of the statement of Theorem~\ref{Th2} in Section~\ref{pfthm-sec}.
\medskip

\subsection{Reduced Perron-Frobenius operator}\label{reduced-sec}
Let  $f\colon[0,1]^3\to [0,1]^3$ be the heterochaos baker map with mostly neutral center. The Perron-Frobenius operator 
$\cP\colon L^1([0,1]^3)\to L^1([0,1]^3)$ associated to $(f,m)$ is given by
\[\cP u=\frac{{\rm d}}{{\rm d}m}\int_{f^{-1}(\cdot)}u{\rm d}m
\ \text{ for }u\in L^1([0,1]^3).\]
Disregarding sets with Lebesgue measure zero, we have
\begin{equation}\label{PF}
\cP u=u\circ f^{-1}.
\end{equation}
For  $u,v\in L^2([0,1]^3)$ we have
\[
\langle u,v\circ f\rangle=\langle \cP u,v\rangle,
\]
where $\langle \cdot,\cdot\rangle$ denotes the $L^2$ inner product:  
$\langle u,v\rangle=\int u\cdot \overline{v}\; {\rm d}m.$

Let $L^1_c([0,1]^3)$ denote the subspace of $L^1([0,1]^3)$ that consists of functions depending only on $x_c$. We naturally  
identify $L^1_c([0,1]^3)$ with $L^1([0,1])$. 
Let $\Pi\colon L^1([0,1]^3)\to L^1_c([0,1]^3)$ denote the projection defined by 
\[
(\Pi u)(x_c)=\int_{[0,1]} \int_{[0,1]}  u(s,x_c,t) {\rm d}s {\rm d}t.
\]
We introduce a {\it reduced Perron-Frobenius operator} 
\[
\cP_0=\Pi\circ \cP:L^1_c([0,1]^3)\to L^1_c([0,1]^3).
\]
We will give a precise description of this operator.
 Let 
\[
L^2_0([0,1])=\left\{u\in L^2([0,1])\colon\langle  u,1\rangle=0\right\}.
\]
Note that $\cP_0(L^2_0([0,1]))\subset L^2_0([0,1])$.
The aim of this section is to prove the next theorem. 

\begin{thm}\label{Th2}
There exists a constant $C>1$ such that for any pair $(\varphi,\psi)$ of functions in 
$C^\theta([0,1])\cap L^2_0([0,1])$, we have
\[
|\langle \cP_0^n \varphi, \psi\rangle| \le C n^{-3/2}\|\varphi\|_{C^\theta}\|\psi\|_{C^\theta}\quad \text{for all $n\ge 1$}.
\]
Moreover, the exponent $-3/2$ is optimal, in that there exist a pair $(\varphi,\psi)$ of functions in  
$C^\theta([0,1])\cap L^2_0([0,1])$ 
and a constant $C(\varphi,\psi)>0$ such that 
\[
|\langle \cP_0^n\varphi,\psi\rangle|\ge C(\varphi,\psi)n^{-3/2}\quad \text{for all }n\ge 1.
\]
\end{thm}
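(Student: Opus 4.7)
The approach is to identify $\mathcal{P}_0$ explicitly as a combination of Koopman and transfer operators for the doubling map, and then to reduce the analysis of $\mathcal{P}_0^n$ to a word-counting problem that matches the gambler's-ruin random walk. A direct calculation from the piecewise-affine definition of $f=f_{1/4,1/4}$ gives, for $u\in L^1_c$,
\[
\mathcal{P}_0 u(y) = \tfrac{1}{2}u(T_2 y) + \tfrac{1}{4}u(y/2)+\tfrac{1}{4}u((y+1)/2),
\]
where $T_2(y)=2y\bmod 1$ is the doubling map. Writing $U=U_{T_2}$ and $\mathcal{L}=\mathcal{L}_{T_2}$ for the Koopman and Perron--Frobenius operators of $T_2$, this reads $\mathcal{P}_0=\tfrac{1}{2}(U+\mathcal{L})$. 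Two properties are crucial: $U=\mathcal{L}^*$ (so $\mathcal{P}_0$ is self-adjoint on $L^2$), and the algebraic identity $\mathcal{L} U=I$ on $L^2$, reflecting that $T_2$ is $2$-to-$1$ Lebesgue-preserving.

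Expanding $\mathcal{P}_0^n = 2^{-n}(U+\mathcal{L})^n = 2^{-n}\sum_w w$ over the $2^n$ length-$n$ words in $\{U,\mathcal{L}\}$, the relation $\mathcal{L} U=I$ lets each word be reduced uniquely, by iterated deletion of adjacent $\mathcal{L} U$ substrings, to a normal form $U^a\mathcal{L}^b$. Interpreting $U$ as an up-step and $\mathcal{L}$ as a down-step in a lattice walk from $0$, the reduction erases ``valleys''; a reflection-principle argument (the gambler's-ruin combinatorics) shows the number of length-$n$ words reducing to $U^a\mathcal{L}^b$ depends only on $s=a+b$ and equals
\[
M(n;s) = \binom{n}{(n+s)/2}-\binom{n}{(n+s+2)/2}
\]
(with $0$ if $n+s$ is odd). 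Consequently
\[
\langle\mathcal{P}_0^n\varphi,\psi\rangle = 2^{-n}\sum_{a,b\ge 0} M(n;a+b)\,\langle\mathcal{L}^b\varphi,\mathcal{L}^a\psi\rangle.
\]

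For the upper bound I would use the H\"older contraction $|\mathcal{L}\varphi|_{C^\theta}\le 2^{-\theta}|\varphi|_{C^\theta}$; combined with $\int\mathcal{L}^b\varphi=\int\varphi=0$ (as $\varphi\in L^2_0$), this yields $\|\mathcal{L}^b\varphi\|_{L^\infty}\le 2^{-b\theta}\|\varphi\|_{C^\theta}$, and similarly for $\psi$. Cauchy--Schwarz together with the local-CLT / ballot estimate $M(n;s)\le C\,2^n(s+1)n^{-3/2}$ (uniform for $s\le\sqrt n$, with Gaussian decay beyond) then gives
\[
|\langle\mathcal{P}_0^n\varphi,\psi\rangle|\le C\, n^{-3/2}\|\varphi\|_{C^\theta}\|\psi\|_{C^\theta}\sum_{s\ge 0}(s+1)^2 2^{-s\theta},
\]
with the series finite for every $\theta>0$. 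For the matching lower bound I would pick smooth test functions adapted to $\mathcal{L}$: with $\varphi(y)=\cos(2\pi y)+\cos(4\pi y)$ and $\psi(y)=\cos(2\pi y)$, the identities $\mathcal{L}\cos(2\pi y)=0$ and $\mathcal{L}\cos(4\pi y)=\cos(2\pi y)$ collapse the double sum to $\langle\mathcal{P}_0^n\varphi,\psi\rangle=2^{-n-1}[M(n;0)+M(n;1)]$, and since $M(n;0)=0$ for odd $n$ while $M(n;1)=0$ for even $n$, exactly one positive summand of size $\sim c\, n^{-3/2}$ survives at each step by Stirling. The principal technical obstacle I anticipate is the word-reduction and enumeration step --- verifying confluence of $\mathcal{L} U\to I$ and matching the count to the ballot numbers $M(n;s)$ via a precise bijection with simple random walks on $\mathbb{Z}_{\ge 0}$ absorbed at the wall; once this combinatorial identification is in place, both bounds follow from routine analytic estimates.
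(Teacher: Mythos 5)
Your proposal is correct, and it takes a genuinely different route from the paper's. Where the paper decomposes $L^2_0([0,1])$ into Haar levels $H_\ell$, tracks the component norms $\|\varphi^{(n)}_\ell\|_{L^\infty}$ via the inequality form of the gambler's-ruin recursion (Lemmas~2.3--2.4), and then invokes the reflection formula for the absorbed transition probabilities together with Stirling (Lemmas~2.5--2.6), you exploit the algebraic identity $\cP_0=\tfrac12(U+\mathcal{L})$, where $U$ and $\mathcal{L}$ are the Koopman and transfer operators of the doubling map, together with the bicyclic relation $\mathcal{L}U=I$; you expand $\cP_0^n$ over the $2^n$ words and reduce each to a normal form $U^a\mathcal{L}^b$. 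Your combinatorial claim is correct: the number of length-$n$ words with normal form $U^a\mathcal{L}^b$ equals the number of $n$-step simple walk paths from $0$ with running maximum exactly $a$ and endpoint $a-b$, which by double reflection is $\binom{n}{(n+s)/2}-\binom{n}{(n+s+2)/2}$ with $s=a+b$, depending only on $s$ as you claim (I checked this against $n=2,3$, and your bound $M(n;s)\le C(s+1)2^n n^{-3/2}$ does hold uniformly in $s$, the Gaussian tail making the regime $s>\sqrt n$ harmless). This yields the exact identity $\langle\cP_0^n\varphi,\psi\rangle = 2^{-n}\sum_{a,b}M(n;a+b)\langle\mathcal{L}^b\varphi,\mathcal{L}^a\psi\rangle$, from which the upper bound follows via the H\"older contraction $\|\mathcal{L}^b\varphi\|_{L^\infty}\le 2^{-b\theta}\|\varphi\|_{C^\theta}$ (valid because $\varphi$ has zero mean); your trigonometric test functions $\varphi=\cos 2\pi y+\cos 4\pi y$, $\psi=\cos 2\pi y$ then collapse the double sum to a single surviving term $\tfrac12\,2^{-n}M(n;n\bmod 2)\asymp n^{-3/2}$, giving the lower bound without the paper's monotonicity/sign-coherence argument (Lemma~2.7). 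What your route buys: an exact identity rather than a domination inequality, a cleaner lower bound, and a verbatim extension to general $M$ since $\mathcal{L}_{T_M}U_{T_M}=I$ for the $M$-fold map. What the paper's route buys: the Haar-level bookkeeping and $L^\infty$-norm domination is set up precisely so that Section~3 can upgrade it to an anisotropic norm on the full three-dimensional operator $\cP$, where a nontrivial $\hat H_0$ block appears; adapting your word calculus to $\cP$ would need a separate argument for the transitions through $\hat H_0$, which the paper handles with Lemma~3.7.
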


\subsection{Decomposition of the Hilbert space}\label{haar-sec}
It is convenient to
 analyze the action of the reduced Perron-Frobenius operator $\cP_0$ using the Haar wavelets \cite{Wavelet}. Define a function 
$\chi\colon\mathbb{R}\to \mathbb{R}$ by
\[
\chi(x)=\begin{cases}\vspace{1mm}
1&\text{if }\displaystyle{x\in \left[0,\frac{1}{2}\right)},\\ 
-1&\text{if }\displaystyle{x\in \left[\frac{1}{2},1\right)},\\
0&\text{otherwise.}
\end{cases}
\]
For $\ell$, $k\in \mathbb{Z}$ define a function $\chi_{\ell,k}:[0,1]\to\mathbb{R}$ by
\[\chi_{\ell,k}(x)= \chi(2^{\ell-1} x-k).\]
Note that $\chi_{1,0}=\chi|_{[0,1]},$
${\rm supp}(\chi_{\ell,k})=[k2^{-\ell+1},(k+1)2^{-\ell+1}],$
and  
\begin{equation}\label{sum-one}\left|\sum_{0\leq k< 2^{\ell-1}}\chi_{\ell,k}(x)\right|=1\ \text{ for all $x\in[0,1)$.}\end{equation}

The system $\{2^{(\ell-1)/2}\cdot\chi_{\ell,k}\}_{\ell,k\in\mathbb Z}$ of functions is an orthonomal basis of $L^2(\mathbb R)$ 
\cite[Theorem~1.4]{Wavelet}.
Hence the subsystem $\{2^{(\ell-1)/2}\cdot \chi_{\ell,k}\colon\ell\geq 1,\ 0\le k< 2^{\ell-1} \}$ is an orthonormal basis of the space 
$L^2_0([0,1]).$
For each $\ell\ge 1$, let $H_\ell$ denote the $2^\ell$-dimensional closed linear subspace of $L^2_0([0,1])$ that is spanned by  the functions $\{\chi_{\ell,k}\colon 0\le k< 2^{\ell-1}\}$. 
The Hilbert space $L^2_0([0,1])$ is  decomposed into the mutually orthogonal subspaces $H_\ell$: 
\begin{equation}\label{Hdec}
    L^2_0([0,1])=\overline{\bigoplus_{\ell\ge 1} H_\ell}.
\end{equation}
Each $\varphi\in L^2_0([0,1])$ is uniquely written as
\begin{equation}\label{eq:decompH}
    \varphi=\sum_{\ell=1}^\infty \varphi_\ell, \quad\varphi_\ell\in H_\ell.
\end{equation}
Since $\{2^{(\ell-1)/2} \chi_{\ell,k}\colon 
\ell\ge 1, 0\le k< 2^{\ell-1}\}$ is an orthonormal basis of $L^2_0([0,1])$, we have  
\begin{equation}\label{eq:phiell}
\varphi_\ell =\sum_{0\le k< 2^{\ell-1}}\langle \varphi,\chi_{\ell,k}\rangle 2^{\ell-1}\chi_{\ell,k}.
\end{equation}

\subsection{The action of the reduced Perron-Frobenius operator}\label{rPF-sec}
The operator $\cP_0$ is written as the sum
\begin{equation}\label{decomposition-P0}
\cP_0=\cP_\alpha + \cP_\beta,
\end{equation}
where 
\begin{equation}\label{pa}\begin{split}
\cP_\alpha u(x)&=
\begin{cases}\vspace{1mm}
\displaystyle{\frac{1}{2}u(2x)}&\text{ if }\displaystyle{x\in \left[0,\frac{1}{2}\right)},\\
\displaystyle{\frac{1}{2}u(2x-1)}&\text{ if } \displaystyle{x\in \left[\frac{1}{2},1\right],}
\end{cases}\end{split}\end{equation}
and
\begin{equation}\label{pb}\begin{split}\cP_\beta u(x)&=\frac{1}{4}  u\left(\frac{x}{2}\right) +\frac{1}{4}  u\left(\frac{x+1}{2}\right)
\quad \text{ for }x\in [0,1].
\end{split}\end{equation}
The two operators $\cP_\alpha$ and $\cP_\beta$ reflect the contraction in the $x_c$-direction on $\Omega_{\alpha_1}\cup\Omega_{\alpha_2}$ and 
 the expansion in the $x_c$-direction on  $\Omega_{\beta_1}\cup \Omega_{\beta_2}$ respectively.
Note that if $u\in L^1([0,1])$ is strictly increasing (resp. decreasing) then $\cP_0u$ is strictly increasing (resp. decreasing).


    We have
\[
\cP_\alpha(H_\ell)\subset H_{\ell+1}\ \text{ and }\ 
\cP_\beta(H_{\ell+1})\subset H_\ell\ \text{ for all $\ell\geq1$,}
\]
and
\[\cP_\beta(H_1)=\{0\}.\]
See \textsc{Figure~\ref{action1}}.

For the operator norm $\|\cdot\|$ with respect to the $L^\infty$ norm, it is easy to check
\begin{equation}\label{estimatePpm}
     \|\cP_\alpha|_{H_\ell}\|=\frac{1}{2}\quad \text{and}\quad 
    \|\cP_\beta|_{H_{\ell+1}}\|\le \frac{1}{2}\ \text{ for all $\ell\geq1$}.
\end{equation}

\begin{lemma}\label{lm:HolderEst} 
\ 

\begin{itemize}
\item[(a)] If $\varphi\in  C^\theta([0,1])\cap L^2_0([0,1])$, then for all $\ell\geq1$
we have
\[
\|\varphi_{\ell}\|_{L^\infty} \le 2^{-\theta\ell}  \|\varphi\|_{C^\theta}.
\]
\item[(b)]
If $\varphi\in L^2_0([0,1])$
is strictly increasing (resp. decreasing), then 
for all $\ell\geq1$ we have \[ \max_{0\leq k<2^{\ell-1}}\langle \varphi,\chi_{\ell,k}\rangle<0 \quad \left(\text{resp. }\min_{0\leq k<2^{\ell-1}}\langle \varphi,\chi_{\ell,k}\rangle>0\right). \]
\end{itemize}
\end{lemma}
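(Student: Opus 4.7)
The plan is to express each Haar coefficient $\langle \varphi,\chi_{\ell,k}\rangle$ as an integral of a translation-difference of $\varphi$, from which both parts drop out at once. Concretely, $\chi_{\ell,k}$ equals $+1$ on the left half of its support $I_{\ell,k}=[k2^{-\ell+1},(k+1)2^{-\ell+1}]$ and $-1$ on the right half, so shifting the right half onto the left half by $2^{-\ell}$ yields
\[
\langle \varphi,\chi_{\ell,k}\rangle=\int_{k2^{-\ell+1}}^{k2^{-\ell+1}+2^{-\ell}}\bigl(\varphi(x)-\varphi(x+2^{-\ell})\bigr)\,{\rm d}x.
\]

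For part (a), I would apply the H\"older bound $|\varphi(x)-\varphi(x+2^{-\ell})|\le|\varphi|_{C^\theta}2^{-\theta\ell}$ together with the fact that the domain of integration has length $2^{-\ell}$, obtaining $|\langle \varphi,\chi_{\ell,k}\rangle|\le 2^{-\ell}\cdot 2^{-\theta\ell}\|\varphi\|_{C^\theta}$. Because the supports of $\chi_{\ell,k}$ for distinct $k$ are essentially disjoint, the reconstruction formula \eqref{eq:phiell} gives $\|\varphi_\ell\|_{L^\infty}=\max_{k} 2^{\ell-1}|\langle\varphi,\chi_{\ell,k}\rangle|$ (a.e.), and plugging in the above bound yields $\|\varphi_\ell\|_{L^\infty}\le \tfrac{1}{2}\cdot 2^{-\theta\ell}\|\varphi\|_{C^\theta}$, which is even slightly stronger than the stated inequality.

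For part (b), if $\varphi$ is strictly increasing then the integrand $\varphi(x)-\varphi(x+2^{-\ell})$ in the identity above is strictly negative at every $x$ in the interval of integration, so $\langle \varphi,\chi_{\ell,k}\rangle<0$ for each of the finitely many $k\in\{0,\ldots,2^{\ell-1}-1\}$; in particular the maximum over $k$ is negative. The strictly decreasing case is identical with signs reversed.

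The entire argument hinges on the one-line translation identity for Haar coefficients, which is elementary; I do not foresee any real obstacle. The only subtlety worth mentioning is the interpretation of \emph{strictly} monotone for an element of $L^2_0$, but working with a pointwise strictly monotone representative (which exists under the standing hypothesis) makes the sign argument immediate.
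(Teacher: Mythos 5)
Your proof is correct and follows essentially the same route as the paper's: both rewrite $\langle\varphi,\chi_{\ell,k}\rangle$ as $\int\bigl(\varphi(x)-\varphi(x+2^{-\ell})\bigr)\,{\rm d}x$ over the left half of the support, bound it via H\"older continuity for part (a), and read off the strict sign for part (b). The extra factor of $\tfrac12$ you noticed is indeed present in the paper's intermediate computation as well; they simply state the weaker clean bound.
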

\begin{proof}
For $0\le k<2^{\ell-1}$, we have 
\[
\begin{split}
|\langle \varphi,\chi_{\ell,k}\rangle|&=\left|\int_{2^{-(\ell-1)}k}^{2^{-(\ell-1)}\left(k+\frac{1}{2}\right)}\varphi(x) {\rm d}x - \int^{2^{-(\ell-1)}(k+1)}_{2^{-(\ell-1)}\left(k+\frac{1}{2}\right)}\varphi(x) {\rm d}x\right|\\
&=\left|\int_{2^{-(\ell-1)}k}^{2^{-(\ell-1)}
\left(k+\frac{1}{2}\right)}\big(\varphi(x)-\varphi(x+2^{-\ell})\big) {\rm d}x\right|\\
&\le 2^{-\theta \ell}\|\varphi\|_{C^\theta}\cdot 2^{-\ell},
\end{split}
\]
and thus
\[
\|\varphi_{\ell}\|_{L^\infty}\leq\max_{0\leq k< 2^{\ell-1}}|\langle \varphi,\chi_{\ell,k}\rangle|\cdot 2^{\ell-1} \cdot  \|\chi_{\ell,k}\|_{L^\infty} \le 2^{-\theta\ell}  \|\varphi\|_{C^\theta},\] as required in (a). Since
\[
\langle \varphi,\chi_{\ell,k}\rangle=\int_{2^{-(\ell-1)}k}^{2^{-(\ell-1)}\left(k+\frac{1}{2}\right)}\big(\varphi(x)-\varphi(x+2^{-\ell})\big) {\rm d}x, 
\]
item (b) follows. 
\end{proof}

\begin{lemma}\label{trans-lem}
Let $\varphi\in C^\theta([0,1])\cap L^2_0([0,1])$, and for $n\geq0$ 
write $\cP_0^n\varphi=\sum_{\ell=1}^\infty\varphi_\ell^{(n)}$, $\varphi_\ell^{(n)}\in H_\ell$.
For all $n\geq1$
we have 
\[
\|\varphi^{(n)}_{\ell}\|_{L^\infty}\le\begin{cases}\vspace{1mm}\displaystyle{ \frac{1}{2} \|\varphi^{(n-1)}_{\ell-1}\|_{L^\infty} +
\frac{1}{2} \|\varphi^{(n-1)}_{\ell+1}\|_{L^\infty}}\ &\text{ if }\ell\geq2,\\
\displaystyle{\frac{1}{2} \|\varphi^{(n-1)}_{\ell+1}\|_{L^\infty}}\ &\text{ if }\ell=1.
\end{cases}\]
If we further assume that $\varphi$ is strictly monotone, 
then the above inequalities are actually equalities.
\end{lemma}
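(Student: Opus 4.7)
The plan is to exploit the grading of $\cP_0=\cP_\alpha+\cP_\beta$ established earlier: $\cP_\alpha$ sends $H_m$ into $H_{m+1}$ and $\cP_\beta$ sends $H_{m+1}$ into $H_m$ (with $\cP_\beta(H_1)=\{0\}$), which already almost forces the claimed three-term recursion. Writing $\cP_0^n\varphi=\cP_0(\cP_0^{n-1}\varphi)$, expanding $\cP_0^{n-1}\varphi=\sum_{m\geq 1}\varphi_m^{(n-1)}$, and applying $\cP_\alpha+\cP_\beta$ term by term, only the summands $\cP_\alpha\varphi_{\ell-1}^{(n-1)}$ and $\cP_\beta\varphi_{\ell+1}^{(n-1)}$ can contribute to the $H_\ell$-component. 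This pins down
\[
\varphi_\ell^{(n)}=\cP_\alpha\varphi_{\ell-1}^{(n-1)}+\cP_\beta\varphi_{\ell+1}^{(n-1)}\quad(\ell\geq 2),\qquad \varphi_1^{(n)}=\cP_\beta\varphi_2^{(n-1)},
\]
the boundary case $\ell=1$ reflecting that no $H_m$ can reach $H_1$ under $\cP_\alpha$.

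With the recursion in hand, the asserted upper bound is immediate: the triangle inequality combined with the operator-norm estimates \eqref{estimatePpm}, namely $\|\cP_\alpha|_{H_{\ell-1}}\|=\tfrac12$ and $\|\cP_\beta|_{H_{\ell+1}}\|\leq\tfrac12$, yields
\[
\|\varphi_\ell^{(n)}\|_{L^\infty}\leq\tfrac12\|\varphi_{\ell-1}^{(n-1)}\|_{L^\infty}+\tfrac12\|\varphi_{\ell+1}^{(n-1)}\|_{L^\infty}
\]
for $\ell\geq 2$, and $\|\varphi_1^{(n)}\|_{L^\infty}\leq\tfrac12\|\varphi_2^{(n-1)}\|_{L^\infty}$ for $\ell=1$. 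This half of the proof is essentially automatic.

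For the equality under strict monotonicity, I would use the monotonicity-preservation property of $\cP_0$ recorded in Section~\ref{rPF-sec}, so that $\cP_0^{n-1}\varphi$ is again strictly monotone, and then invoke Lemma~\ref{lm:HolderEst}(b) to conclude that the Haar coefficients of $\varphi_m^{(n-1)}$ at every fixed level $m$ share a common sign. The plan is to track the two contributions $\cP_\alpha\varphi_{\ell-1}^{(n-1)}$ and $\cP_\beta\varphi_{\ell+1}^{(n-1)}$ in the $\{\chi_{\ell,j}\}$-basis using the explicit formulas $\cP_\alpha\chi_{\ell-1,k}=\tfrac12(\chi_{\ell,k}+\chi_{\ell,k+2^{\ell-2}})$ and the corresponding pair-averaging rule for $\cP_\beta$; the common-sign property ensures that these two contributions reinforce rather than cancel at every index $j$, and the periodic duplication structure produced by $\cP_\alpha$ (every residue class modulo $2^{\ell-2}$ receives the same magnitude) is what allows the operator-norm bounds to be saturated at a single index where the $\cP_\beta$ contribution also attains its maximum.

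The routine half is the inequality; the real technical point, which I expect to be the main obstacle, is the alignment argument in the equality claim — verifying that in the monotone setting one can always pick an index $j$ in $H_\ell$ at which both $|(\cP_\alpha\varphi_{\ell-1}^{(n-1)})_j|$ and $|(\cP_\beta\varphi_{\ell+1}^{(n-1)})_j|$ achieve their respective $L^\infty$-norms. The plan is to handle this by explicit Haar-basis bookkeeping using the periodic-replication action of $\cP_\alpha$ together with the sign information from Lemma~\ref{lm:HolderEst}(b).
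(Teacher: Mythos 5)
Your derivation of the recursion
\[
\varphi_\ell^{(n)}=\cP_\alpha\varphi_{\ell-1}^{(n-1)}+\cP_\beta\varphi_{\ell+1}^{(n-1)}\quad(\ell\ge 2),\qquad
\varphi_1^{(n)}=\cP_\beta\varphi_2^{(n-1)},
\]
and of the upper inequality from the triangle inequality together with the operator-norm bounds \eqref{estimatePpm} is exactly the argument the paper intends when it says the lemma ``follows from the decomposition \eqref{decomposition-P0} and the estimates \eqref{estimatePpm}.'' That half is fine.

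The equality half, however, has a genuine gap, and the alignment you yourself flag as ``the main obstacle'' cannot be produced in the generality of the statement. Sign-coherence from Lemma~\ref{lm:HolderEst}(b) prevents cancellation between the $\cP_\alpha$ and $\cP_\beta$ contributions, but it does not force the argmax of the level-$(\ell-1)$ coefficients, nor the two paired level-$(\ell+1)$ coefficients $b_j$ and $b_{j+2^{\ell-1}}$, to sit over a common index $j$; the periodic duplication coming from $\cP_\alpha$ only repeats each residue class, it does not move maxima onto one another. Already at $\ell=1$ (where there is no $\cP_\alpha$ term) one sees the failure explicitly: $\cP_\beta\bigl(d_0\chi_{2,0}+d_1\chi_{2,1}\bigr)=\tfrac14(d_0+d_1)\chi_{1,0}$, so $\|\cP_\beta u\|_{L^\infty}=\tfrac14|d_0+d_1|\le\tfrac12\max(|d_0|,|d_1|)$ with equality only when $d_0=d_1$. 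For the strictly increasing, mean-zero function $\varphi(x)=x^2-\tfrac13$ one computes $\langle\varphi,\chi_{2,0}\rangle=-\tfrac1{32}$ and $\langle\varphi,\chi_{2,1}\rangle=-\tfrac3{32}$, so $d_0\ne d_1$ and the claimed equality already fails at $n=1$, $\ell=1$: the left side is $\tfrac1{16}$ while $\tfrac12\|\varphi_2^{(0)}\|_{L^\infty}=\tfrac3{32}$. So a proof along the lines you sketch cannot succeed for an arbitrary strictly monotone $\varphi$. (Relatedly, the remark in Section~\ref{rPF-sec} that $\cP_0$ preserves strict monotonicity is not literally correct either, since $\cP_\alpha u$ has a downward jump at $x=\tfrac12$ of size $\tfrac12\bigl(u(1^-)-u(0)\bigr)$; only the sign statement of Lemma~\ref{lm:HolderEst}(b) actually survives.) What does hold --- and is all that Lemma~\ref{lm:lower} downstream requires --- is equality when the Haar coefficients of $\varphi$ are constant in $k$ at each level $\ell$, e.g.\ for affine $\varphi(x)=\alpha x+\beta$, because that structural property is preserved by both $\cP_\alpha$ and $\cP_\beta$ and then the averaging inside $\cP_\beta$ loses nothing. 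A correct proof should establish the equality only for such special $\varphi$ (one test pair suffices for the lower bound), rather than attempt the alignment argument for all strictly monotone functions.
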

\begin{proof}
The claims follows from the decomposition \eqref{decomposition-P0} and the estimates \eqref{estimatePpm}.
\end{proof}

\begin{figure}
\begin{center}
\includegraphics[height=4cm,width=9cm]
{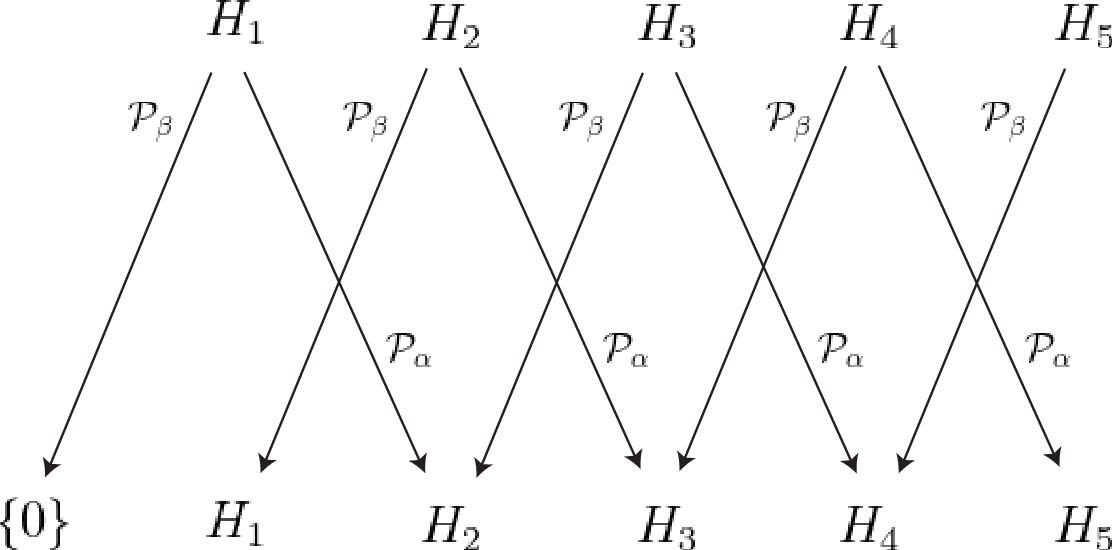}
\caption
{The actions of $\cP_\alpha$ and $\cP_\beta$ on the components $H_\ell$ for $\ell\geq0$. 
}\label{action1}
\end{center}
\end{figure}

\subsection{Comparison with gambler's ruin problem}\label{gamble}
The symmetric simple random walk on $\mathbb N$ with an absorbing wall at $0$ is a well-known model in probability theory, aka `gambler's ruin problem'.
In this model, a particle starts at an integer point in $\mathbb  N$, moves to the right or left with equal probability $1/2$. The particle is absorbed, or disappears from the system once it reaches the point $0$.

  Suppose that the particle is initially distributed at $\ell\in \mathbb{N}$ with probability $q^{(0)}_\ell$:
$\sum_{\ell=1}^\infty q_\ell^{(0)}=1$. 
The probability $q^{(n)}_\ell$ with which the particle is at $\ell\in \mathbb{N}$ at time $n\ge 1$ is given by the recursion formula 
\begin{equation}\label{transition3}
q^{(n)}_\ell=\begin{cases}\vspace{1mm}
\displaystyle{\frac{1}{2}q^{(n-1)}_{\ell-1}+\frac{1}{2}q^{(n-1)}_{\ell+1}}&\quad \text{if $\ell\ge 2$,}\\
\displaystyle{\frac{1}{2}q^{(n-1)}_{\ell+1}}&\quad \text{if $\ell=1$.}
\end{cases}
\end{equation}
The total probability $\sum_{\ell=1}^\infty q_{\ell}^{(n)}$ decreases as $n$ increases due to the absorption at $0$.
For $\ell$, $\ell'\in \mathbb N$ and $n\geq1$, let $p_{\ell\ell'}^{(n)}$ denote the probability with which the particle starts at $\ell$ and reaches $\ell'$ at time $n$ without passing $0$. We have
\begin{equation}\label{prob-eq1}q_\ell^{(n)}=\sum_{\substack{\ell'\geq1\\ |\ell'-\ell|\leq n}}q^{(0)}_{\ell'}p^{(n)}_{\ell'\ell}.\end{equation}

 Taking an appropriate initial probability distribution, one can estimate $\varphi_\ell^{(n)}$
in Lemma~\ref{trans-lem} in terms of $q_\ell^{(n)}$ as follows.
\begin{lemma}\label{dominate-lem}Let $\varphi\in C^\theta([0,1])\cap L^2_0([0,1])$, $\varphi\not\equiv0$, and for $n\geq0$ 
write $\cP_0^n\varphi=\sum_{\ell=1}^\infty\varphi_\ell^{(n)}$, $\varphi_\ell^{(n)}\in H_\ell$. Let 
$(q_\ell^{(n)})_{n=0,\ell=1}^\infty$
be given by \eqref{transition3} and the initial condition
\begin{equation}\label{initial-d}
q^{(0)}_\ell= C_\varphi^{-1}\|\varphi^{(0)}_{\ell}\|_{L^\infty} \quad \text{with }C_\varphi=\sum_{\ell=1}^\infty \|\varphi^{(0)}_{\ell}\|_{L^\infty}\in(0,\infty).
\end{equation}
Then, for all $n\geq0$ we have \begin{equation}\label{dominate-eq}
\|\varphi_\ell^{(n)}\|_{L^\infty}\le C_\varphi  q^{(n)}_\ell\ \text{ for  all }\ell\geq1.
\end{equation}
If we further assume that $\varphi$ is strictly monotone, then the inequality in \eqref{dominate-eq} becomes the equality.\end{lemma}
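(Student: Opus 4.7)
\medskip

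\noindent\textbf{Proof plan for Lemma~\ref{dominate-lem}.}
The natural approach is induction on $n$, matching the two recursive structures on the nose: the Haar-component bound provided by Lemma~\ref{trans-lem} and the walk transition \eqref{transition3}. First I would verify that the constant $C_\varphi$ is finite and strictly positive. Finiteness is immediate from Lemma~\ref{lm:HolderEst}(a), which gives $\|\varphi_\ell^{(0)}\|_{L^\infty}\le 2^{-\theta\ell}\|\varphi\|_{C^\theta}$ and hence $C_\varphi\le(2^\theta-1)^{-1}\|\varphi\|_{C^\theta}$. Positivity is forced by $\varphi\not\equiv 0$ together with the orthogonal decomposition \eqref{Hdec}, which guarantees that at least one component $\varphi_\ell^{(0)}$ is nonzero. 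Consequently $q^{(0)}_\ell:=C_\varphi^{-1}\|\varphi_\ell^{(0)}\|_{L^\infty}$ defines an honest probability distribution on $\mathbb{N}$, since $\sum_\ell q^{(0)}_\ell=1$ by construction.

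Next, the induction. The base case $n=0$ is just the normalization \eqref{initial-d}, which gives equality $\|\varphi_\ell^{(0)}\|_{L^\infty}=C_\varphi q_\ell^{(0)}$. For the inductive step, assume \eqref{dominate-eq} holds at time $n-1$. For $\ell\ge 2$, Lemma~\ref{trans-lem} and the inductive hypothesis yield
\[
\|\varphi_\ell^{(n)}\|_{L^\infty}\le \tfrac12\|\varphi_{\ell-1}^{(n-1)}\|_{L^\infty}+\tfrac12\|\varphi_{\ell+1}^{(n-1)}\|_{L^\infty}\le \tfrac12 C_\varphi q_{\ell-1}^{(n-1)}+\tfrac12 C_\varphi q_{\ell+1}^{(n-1)}=C_\varphi q_\ell^{(n)},
\]
where the final equality is exactly the walk recursion \eqref{transition3}. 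The case $\ell=1$ is handled identically using the second branch of both Lemma~\ref{trans-lem} and \eqref{transition3}; note that the absorbing wall of the walk corresponds precisely to the fact that $\cP_\beta(H_1)=\{0\}$, so the ``leaked mass'' on the dynamical side matches the probability absorbed at $0$ on the walk side.

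Finally, for the equality statement under the additional assumption that $\varphi$ is strictly monotone, the plan is to propagate monotonicity along the orbit. The observation recorded in Section~\ref{rPF-sec} — that $\cP_0$ preserves strict monotonicity — implies, by iteration, that $\cP_0^{n-1}\varphi$ is strictly monotone for every $n\ge 1$. Hence at each step of the induction the strictly-monotone hypothesis of Lemma~\ref{trans-lem} is still satisfied by the function being propagated, and so both inequalities in the inductive step above become equalities. Chaining these equalities with the base case equality gives $\|\varphi_\ell^{(n)}\|_{L^\infty}=C_\varphi q_\ell^{(n)}$ for all $n,\ell$, as required.

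The main conceptual point — rather than any difficult calculation — is the clean alignment between the Haar recursion in Lemma~\ref{trans-lem} and the gambler's ruin recursion \eqref{transition3}, together with the role of $\cP_\beta(H_1)=\{0\}$ as an absorbing wall. Once these two pieces of bookkeeping are set up, both the inequality and the equality parts follow by a short induction; the only subtlety worth flagging in writing is to make sure the equality case invokes the monotonicity-preservation property at every iterate, not merely at the initial function.
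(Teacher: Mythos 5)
Your proposal is correct and follows essentially the same route as the paper: verify that $C_\varphi\in(0,\infty)$ via Lemma~\ref{lm:HolderEst}(a), then induct on $n$ by matching the recursion of Lemma~\ref{trans-lem} term-by-term with the walk recursion~\eqref{transition3}, with the equality case handled by the equality clause of Lemma~\ref{trans-lem} (which, as you note, rests on $\cP_0$ preserving strict monotonicity). The only minor point is that Lemma~\ref{trans-lem} is already stated for all $n\ge 1$ with hypothesis only on the initial $\varphi$, so the explicit propagation-of-monotonicity step you flag is really internal to that lemma rather than something you need to re-argue here.
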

\begin{proof}
The boundedness
of $C_\varphi$ follows from
 Lemma~\ref{lm:HolderEst}(a).
We prove \eqref{dominate-eq} by induction on $n$. For $n=0$,
\eqref{dominate-eq} is obvious
 from \eqref{initial-d}. Let $k\geq0$ and suppose
  \eqref{dominate-eq} holds for  $n=k$. Comparing the inequalities in Lemma~\ref{trans-lem} with \eqref{transition3}, we get \eqref{dominate-eq} for $n=k+1$. The proof is complete. 
  \end{proof}

In order to estimate $q_\ell^{(n)}$ from above, we compute the transition probability $p_{\ell\ell'}^{(n)}$ and 
give an upper bound when both $\ell$ and $\ell'$ are small relative to $n$. 
\begin{lemma}\label{prob}
For all $\ell,\ell'\geq1$ and $n\geq1$ with $|\ell-\ell'|\leq n$, we have
\[
p_{\ell\ell'}^{(n)}=
\begin{cases}
\displaystyle{2^{-n}\left(\binom{n}{\frac{n-\ell+\ell'}{2} }-\binom{n}{\frac{n-\ell-\ell'}{2}}\right)}
&\quad \text{if $n-\ell+\ell'$ is even,}\\
0&\quad \text{otherwise.}
\end{cases}
\]
\end{lemma}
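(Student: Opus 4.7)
The plan is a standard application of the reflection principle, the only subtleties being the parity condition and being precise about the bijection.

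First I would dispose of the parity: each step changes the position by $\pm 1$, so after $n$ steps from $\ell$ to $\ell'$ the quantity $n - (\ell' - \ell)$ must be a non-negative even integer. Thus if $n - \ell + \ell'$ is odd, there are no lattice paths of length $n$ from $\ell$ to $\ell'$ at all, and $p_{\ell\ell'}^{(n)} = 0$. Assume from now on that $n-\ell+\ell'$ is even and set $u = (n+\ell'-\ell)/2$, $d = (n-\ell'+\ell)/2$, the numbers of up-steps and down-steps in any unrestricted path from $\ell$ to $\ell'$ in $n$ steps. Each such path has probability $2^{-n}$ under the symmetric random walk, and their total number is $\binom{n}{u} = \binom{n}{(n-\ell+\ell')/2}$.

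Next I would count the paths from $\ell$ to $\ell'$ that visit $0$ at some intermediate time. Given such a path, let $\sigma$ be its first visit time to $0$ and reflect the initial segment $(\ell, \ldots, 0)$ about the horizontal axis; this yields a lattice path from $-\ell$ to $\ell'$ of length $n$. Conversely, any lattice path from $-\ell$ to $\ell'$ must cross $0$ (since $-\ell < 0 < \ell'$), and reflecting the portion up to its first visit to $0$ produces a path from $\ell$ to $\ell'$ of length $n$ visiting $0$. This is the classical bijection. Paths from $-\ell$ to $\ell'$ in $n$ steps have $u' = (n+\ell'+\ell)/2$ up-steps, and their number is
\[
\binom{n}{(n+\ell'+\ell)/2} = \binom{n}{(n-\ell-\ell')/2}.
\]

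Subtracting and multiplying by $2^{-n}$ yields the stated formula for $p_{\ell\ell'}^{(n)}$. The one point that deserves a sentence of justification is that ``not passing $0$'' in the statement of the lemma indeed means never visiting the absorbing site $\{0\}$ before time $n$, which is the event counted by the reflection-principle argument above. There is no serious obstacle; the argument is entirely combinatorial, and the formula is the standard ballot-type identity for the simple random walk absorbed at $0$ (see e.g.\ \cite{Fel}).
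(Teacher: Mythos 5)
Your proof is correct and takes essentially the same approach as the paper: both invoke the parity constraint, then apply the reflection principle to count the paths from $\ell$ to $\ell'$ that touch $0$. The only cosmetic difference is that you reflect the initial segment of the path about its \emph{first} visit to $0$, obtaining a bijection with paths from $-\ell$ to $\ell'$, whereas the paper reflects the final segment about the \emph{last} visit to $0$, obtaining a bijection with paths from $\ell$ to $-\ell'$; by the symmetry $\binom{n}{k}=\binom{n}{n-k}$ these give the same count $\binom{n}{(n-\ell-\ell')/2}$, so the two arguments are interchangeable.
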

\begin{proof}
    Although the derivation of this formula is standard in probability theory, we give a proof for completeness. 
    If $n-\ell+\ell'$ is odd, then clearly $p^{(n)}_{\ell\ell'}=0$. Suppose $n-\ell+\ell'$ is even.
    Consider the symmetric simple random walk on $\mathbb Z$. For each $n\geq1$
let $A_n$ denote the set of paths that start from $\ell$ and reach $\ell'$ at time $n$ (possibly passing $0$).
Let $B_n$ denote the set of paths that start from $\ell$ and reach $\ell'$ at time $n$ passing $0$ at least once. Clearly we have
\[2^np^{(n)}_{\ell\ell'}=\#A_n-\#B_n,\] and
\[\#A_n=2^n\binom{n}{\frac{n-\ell+\ell'}{2} }.\]

Let $C_n$ denote the set of paths that start from $\ell$ and reach $-\ell'$ at time $n$. We have
\[\#C_n=2^n\binom{n}{\frac{n-\ell-\ell'}{2} }.\]    
     Let $(s_k)_{k=0}^n$ be an arbitrary path in $B_n$:
    $s_0=\ell$, $s_n=\ell'$ and $s_k=0$ for some $k\in\{1,\ldots,n-1\}$.
Let $\nu=\max\{1\leq k\leq n-1\colon s_k=0\}$ and define another path $(s_k')_{k=0}^n$ by 
\[
s_{k}'=\begin{cases}
s_{k}&\quad \text{for $k\le \nu$,}\\
-s_{k}&\quad \text{for $k>\nu$.}
\end{cases}
\]
Clearly, this correspondence of paths is a bijection from $B_n$ to $C_n$, and so $\#B_n=\#C_n$. Hence the desired equality holds.
\end{proof}

\begin{lemma}\label{lm:num}
For all sufficiently large $n\ge 1$ and $\ell,\ell'\ge 1$ such that $n-\ell+\ell'$ is even and $\max\{\ell,\ell'\}\le n^{1/4}$, we have 
\[
C_0^{-1}n^{-3/2}\ell\ell'\le p_{\ell\ell'}^{(n)}\le C_0n^{-3/2}\ell\ell',
\]   
for some constant $C_0>1$ independent of $n,\ell,\ell'$. 
\end{lemma}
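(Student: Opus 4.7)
My plan is to substitute the explicit formula from Lemma~\ref{prob} into the desired estimate and control the resulting difference of binomial coefficients by a careful application of Stirling's formula.

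Set $m_1=(n-\ell+\ell')/2$ and $m_2=(n-\ell-\ell')/2=m_1-\ell'$ (both integers because $n-\ell+\ell'$ is even), and write $j_i=2m_i-n$, so that $j_1=\ell'-\ell$, $j_2=-\ell-\ell'$, and $j_2^2-j_1^2=4\ell\ell'$. Under the hypothesis $\max\{\ell,\ell'\}\le n^{1/4}$ we have $|j_i|\le 2n^{1/4}$, hence $j_i^2/n\le 4n^{-1/2}$ and $j_i^4/n^3\le 16 n^{-2}$. I would then apply Stirling's formula $\log N!=N\log N-N+\frac12\log(2\pi N)+\frac{1}{12N}+O(N^{-3})$ to each factorial in $\binom{n}{m_i}=n!/(m_i!(n-m_i)!)$. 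Taylor-expanding the binary entropy around $1/2$, namely $H(1/2+\epsilon)=\log 2-2\epsilon^2+O(\epsilon^4)$ with $\epsilon=j_i/(2n)$, and the prefactor $\frac12\log(n/(2\pi m_i(n-m_i)))$ to the appropriate order, I obtain the local central limit estimate
\[
2^{-n}\binom{n}{m_i}=\sqrt{\tfrac{2}{\pi n}}\,e^{-j_i^2/(2n)}\bigl(1+\eta_i\bigr)\quad\text{with }|\eta_i|\le C_1/n.
\]

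The critical observation, which underlies the lower bound, is that the $O(1/n)$ part of the Stirling correction is essentially independent of $k$ for $k$ close to $n/2$: a direct expansion of $\tfrac{1}{12n}-\tfrac{1}{12k}-\tfrac{1}{12(n-k)}$ with $k=(n+j)/2$ gives $-\tfrac{1}{4n}+O(j^2/n^3)$, whose $k$-dependence is negligible. Consequently, pulling out the common factor $e^{-1/(4n)}$, I can upgrade the local CLT to
\[
2^{-n}\binom{n}{m_i}=\sqrt{\tfrac{2}{\pi n}}\,e^{-1/(4n)}\,e^{-j_i^2/(2n)}\bigl(1+\tilde\eta_i\bigr),\qquad |\tilde\eta_i|\le C_2 n^{-3/2},
\]
uniformly in the stated range of $j_i$.

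Subtracting the two instances and Taylor-expanding the exponentials (which is legitimate because both exponents are bounded by $2n^{-1/2}$),
\[
e^{-j_1^2/(2n)}-e^{-j_2^2/(2n)}=\frac{j_2^2-j_1^2}{2n}\bigl(1+O(n^{-1/2})\bigr)=\frac{2\ell\ell'}{n}\bigl(1+O(n^{-1/2})\bigr),
\]
I conclude
\[
p_{\ell\ell'}^{(n)}=\sqrt{\tfrac{2}{\pi n}}\cdot\frac{2\ell\ell'}{n}\bigl(1+O(n^{-1/2})\bigr)+O(n^{-2})=2\sqrt{\tfrac{2}{\pi}}\,\ell\ell'\,n^{-3/2}\bigl(1+O(n^{-1/2})\bigr),
\]
where in the last step I use $\ell\ell'\ge 1$ to absorb the $O(n^{-2})$ additive error into a relative error of order $n^{-1/2}$. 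For all sufficiently large $n$, the factor $1+O(n^{-1/2})$ lies in $[1/2,2]$, yielding the two-sided bound with $C_0=4\sqrt{2/\pi}$.

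The main obstacle is the lower bound in the regime $\ell\ell'=O(1)$: here the principal term and the naive Stirling remainder are both of order $n^{-3/2}$, so a merely $O(1/n)$ multiplicative Stirling error would not suffice. The cancellation of the $-1/(4n)$ correction between $m_1$ and $m_2$, which improves the effective error to $O(n^{-2})$, is precisely what resolves this difficulty and makes the estimate uniform in $\ell,\ell'$ up to $n^{1/4}$.
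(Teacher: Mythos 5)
Your proof is correct and takes a genuinely different route from the paper's. You apply Stirling's formula to each binomial coefficient individually, obtaining a sharpened local central limit estimate $2^{-n}\binom{n}{m_i}=\sqrt{2/(\pi n)}\,e^{-1/(4n)}e^{-j_i^2/(2n)}(1+\tilde\eta_i)$ with $|\tilde\eta_i|=O(n^{-3/2})$, and your crucial observation that the $-1/(4n)$ Stirling correction is (to leading order) independent of $j_i$ and hence cancels in the difference is both correct and necessary: as you rightly note, a cruder $O(1/n)$ multiplicative error would swamp the principal term $\ell\ell'\,n^{-3/2}$ when $\ell\ell'$ is of order one, destroying the lower bound. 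The paper instead writes $p_{\ell\ell'}^{(n)}=P_{n,\ell,\ell'}\bigl(\binom{n}{m_1}/\binom{n}{m_2}-1\bigr)$, where $P_{n,\ell,\ell'}=2^{-n}\binom{n}{m_2}\asymp n^{-1/2}$ by a crude Stirling sandwich, and then evaluates the ratio directly as the telescoping product $\prod_{k=0}^{\ell'-1}\frac{1+(\ell+\ell'-2k)/n}{1-(\ell-\ell'+2k)/n}$, which is easily shown to lie between $e^{\ell\ell'/n}$ and $e^{3\ell\ell'/n}$. This ratio trick is arguably cleaner: it factors out the problematic $1/n$-order Stirling correction automatically, so one never has to track it, and the remaining product is a finite elementary computation. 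Your version buys an explicit asymptotic $p_{\ell\ell'}^{(n)}\sim 2\sqrt{2/\pi}\,\ell\ell'\,n^{-3/2}$ with identified constant, at the cost of a more delicate bookkeeping of Stirling remainders; the paper's version buys brevity and robustness, at the cost of a less explicit constant $C_0$.
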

\begin{proof}
We put 
\[
P_{n,\ell,\ell'}=2^{-n}\binom{n}{\frac{n-\ell-\ell'}{2}},
\]
which is the transition probability  from $\ell$ to $-\ell'$ of the symmetric simple random walk on $\mathbb{Z}$.
Using Stirling's approximation for factorials,
we obtain
\begin{equation}\label{lm:num-eq1}
\frac{1}{C\sqrt{n}}\le 2^{-n}\binom{n}{\lfloor\frac{n}{2}+n^{1/4}\rfloor}\le  P_{n,\ell,\ell'}\le 2^{-n}\binom{n}{\lfloor\frac{n}{2}\rfloor }\le  \frac{C}{\sqrt{n}},
\end{equation}
for some constant $C>1$ independent of $n,\ell,\ell'$. 
A direct calculation shows
\begin{align*}
    \frac{\binom{n}{\frac{n-\ell+\ell'}{2}}}
    {\binom{n}{\frac{n-\ell-\ell'}{2}}}&=\frac{n!}{\left(\frac{n-\ell+\ell'}{2}\right)! \left(\frac{n+\ell-\ell'}{2}\right)!}\cdot
\frac{\left(\frac{n-\ell-\ell'}{2}\right)!\left(\frac{n+\ell+\ell'}{2}\right)!}{n!}\\
&=\frac{\left(\frac{n+\ell+\ell'}{2}\right)!}{\left(\frac{n+\ell-\ell'}{2}\right)!}\cdot
\frac{\left(\frac{n-\ell-\ell'}{2}\right)! }
{\left(\frac{n-\ell+\ell'}{2}\right)! }\\
&=\prod_{k=0}^{\ell'-1}\frac{n+\ell+\ell'-2k}{2} \prod_{k'=0}^{\ell'-1}\frac{2}{n-\ell+\ell'-2k'}\\
&=\prod_{k=0}^{\ell'-1}\frac{n+\ell+\ell'-2k}{n-\ell+\ell'-2k}=\prod_{k=0}^{\ell'-1}\frac{1+\frac{\ell+\ell'-2k}{n}}{1-\frac{\ell-\ell'+2k}{n}},
\end{align*}
and thus
\begin{equation}\label{lm:num-eq2}
    2^{-n}\left(\binom{n}{\frac{n-\ell+\ell'}{2}}-\binom{n}{\frac{n-\ell-\ell'}{2}}\right)= P_{n,\ell,\ell'}\left(\prod_{k=0}^{\ell'-1}\frac{1+\frac{\ell+\ell'-2k}{n}}{1-\frac{\ell-\ell'+2k}{n}}-1\right).
\end{equation}
By the assumption $\max\{\ell,\ell'\}\leq n^{1/4}$ and the simple estimate
$1+x\leq e^{x}$ when $|x|$ is small,
we have
\[\begin{split}\prod_{k=0}^{\ell'-1}\frac{1+\frac{\ell+\ell'-2k}{n}}{1-\frac{\ell-\ell'+2k}{n}}&=\prod_{k=0}^{\ell'-1}\left(1+\frac{\frac{2\ell }{n}}{1-\frac{\ell-\ell'+2k}{n}}\right)\leq\prod_{k=0}^{\ell'-1}\exp\left(\frac{\frac{2\ell }{n}}{1-\frac{\ell-\ell'+2k}{n}}\right)\\
&\leq\prod_{k=0}^{\ell'-1}\exp\left(\frac{3\ell }{n}\right)=\exp\left(\frac{3\ell\ell'}{n}\right).\end{split}\]
Plugging this estimate into the right-hand side of \eqref{lm:num-eq2}, and then using
\eqref{lm:num-eq1} and another simple estimate
$e^x\leq 1+x+x^2$ when $|x|$ is small, we obtain 
\begin{align*}
2^{-n}\left(\binom{n}{\frac{n-\ell+\ell'}{2}}-\binom{n}{\frac{n-\ell-\ell'}{2}}\right)
&\le P_{n,\ell,\ell'}  \left(\exp\left(\frac{3\ell\ell'}{n}\right)-1\right)\\
    &\le 4C n^{-1/2}\cdot \frac{\ell\ell'}{n}=4Cn^{-3/2}\ell\ell'.
\end{align*}

Similarly, by $\max\{\ell,\ell'\}\leq n^{1/4}$ and $ 1+x\geq e^{x/2}$ when $x>0$ is small, we have
\[\prod_{k=0}^{\ell'-1}\frac{1+\frac{\ell+\ell'-2k}{n}}{1-\frac{\ell-\ell'+2k}{n}}\geq\exp\left(\frac{\ell\ell'}{n}\right).\]
Plugging this estimate into the right-hand side of \eqref{lm:num-eq2},
and then using \eqref{lm:num-eq1} and $e^{x}\geq1+x$ when $|x|$ is small, we obtain
\begin{align*}
2^{-n}\left(\binom{n}{\frac{n-\ell+\ell'}{2}}-\binom{n}{\frac{n-\ell-\ell'}{2}}\right)
&\ge P_{n,\ell,\ell'} \left(\exp\left(\frac{\ell\ell'}{n}\right)-1\right)\\
    &\ge C^{-1} n^{-1/2}\cdot \frac{\ell\ell'}{n}=C^{-1}\ell\ell' n^{-3/2}.
\end{align*}
Taking $C_0=4C$ yields the desired inequalities.
\end{proof}

\subsection{Polynomial lower bounds}\label{low-sec}
The next lemma establishes the lower bound in Theorem~\ref{Th2}.
\begin{lemma}\label{lm:lower} If  $\varphi,\psi\in  C^\theta([0,1])\cap L^2_0([0,1])$ 
are strictly monotone, then there exists a constant $C(\varphi,\psi)>0$ such that 
\[
|\langle \cP_0^n\varphi,\psi\rangle|\ge C(\varphi,\psi)n^{-3/2}\quad \text{for all }n\ge 1.
\]
\end{lemma}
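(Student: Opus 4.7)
The plan is to extract the lower bound from a single Haar mode, using strict monotonicity to rule out cancellations in the Haar expansion of $\langle \cP_0^n\varphi,\psi\rangle$, and then to invoke the equality case of Lemma~\ref{dominate-lem} together with Lemma~\ref{lm:num}.

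Without loss of generality I assume both $\varphi$ and $\psi$ are strictly increasing; the three other sign combinations are handled by changing the sign of the inner product. Since $\cP_0$ preserves strict monotonicity, $\cP_0^n\varphi$ is strictly increasing for every $n\ge 0$. Setting $c^{(n)}_{\ell,k}:=\langle \cP_0^n\varphi,\chi_{\ell,k}\rangle$ and $d_{\ell,k}:=\langle\psi,\chi_{\ell,k}\rangle$, Lemma~\ref{lm:HolderEst}(b) gives $c^{(n)}_{\ell,k}<0$ and $d_{\ell,k}<0$ for every $\ell\ge 1$ and every $0\le k<2^{\ell-1}$.

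Next, expanding both functions in the orthonormal Haar basis $\{2^{(\ell-1)/2}\chi_{\ell,k}\}$ yields
\[
\langle \cP_0^n\varphi,\psi\rangle=\sum_{\ell\ge 1}\sum_{0\le k<2^{\ell-1}} 2^{\ell-1}\,c^{(n)}_{\ell,k}\,d_{\ell,k},
\]
in which every summand is strictly positive, so there is no cancellation. Discarding all but the $(\ell,k)=(1,0)$ term gives $\langle\cP_0^n\varphi,\psi\rangle\ge |c^{(n)}_{1,0}|\cdot|d_{1,0}|$, where $|d_{1,0}|$ is a fixed positive constant depending only on $\psi$. Since $H_1=\mathrm{span}\{\chi_{1,0}\}$ and $|\chi_{1,0}|\equiv 1$ on $[0,1)$, we have $\|\varphi^{(n)}_1\|_{L^\infty}=|c^{(n)}_{1,0}|$, and the equality clause of Lemma~\ref{dominate-lem} applied to the monotone $\varphi$ gives $|c^{(n)}_{1,0}|=C_\varphi\, q^{(n)}_1$.

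It remains to bound $q^{(n)}_1$ from below. By \eqref{prob-eq1} we have $q^{(n)}_1\ge q^{(0)}_{\ell_0}\, p^{(n)}_{\ell_0,1}$ for any $\ell_0\ge 1$; Lemma~\ref{lm:HolderEst}(b) combined with the definition \eqref{initial-d} makes every $q^{(0)}_\ell$ strictly positive. The one subtle point is parity: Lemma~\ref{prob} forces $p^{(n)}_{\ell_0,1}=0$ unless $n-\ell_0+1$ is even, so I choose $\ell_0=1$ when $n$ is even and $\ell_0=2$ when $n$ is odd. For $n\ge 16$ both choices satisfy the hypothesis $\max(\ell_0,1)\le n^{1/4}$ of Lemma~\ref{lm:num}, which then yields $q^{(n)}_1\ge C_0^{-1}\min(q^{(0)}_1,\,2q^{(0)}_2)\,n^{-3/2}$. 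For the finitely many remaining small values of $n$, the sign argument above already gives $\langle \cP_0^n\varphi,\psi\rangle>0$, so they can be absorbed into the final constant $C(\varphi,\psi)$ by taking a minimum. The main obstacle is really bookkeeping: arranging the alternating parity choice of $\ell_0$ so that no value of $n$ is left uncovered.
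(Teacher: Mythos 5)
Your proof is correct and follows essentially the same route as the paper's: both drop to the $\ell=1$ Haar component (using strict monotonicity and Lemma~\ref{lm:HolderEst}(b) to rule out cancellation), invoke the equality clause of Lemma~\ref{dominate-lem} to convert $\|\varphi_1^{(n)}\|_{L^\infty}$ into $C_\varphi q_1^{(n)}$, and then lower-bound $q_1^{(n)}$ via \eqref{prob-eq1} and Lemma~\ref{lm:num}. Your treatment is slightly more careful than the paper's about the parity of $n$ and about the finitely many small $n$, which the paper elides.
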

\begin{proof}
For $n\geq0$ 
write $\cP_0^n\varphi=\sum_{\ell=1}^\infty\varphi_\ell^{(n)}$, $\varphi_\ell^{(n)}\in H_\ell$ and
$\psi=\sum_{\ell=1}^\infty\psi_\ell$,
 $\psi_\ell\in H_\ell$.
Let $(q_\ell^{(n)})_{n=0,\ell=1}^\infty$ be given by \eqref{transition3} and the initial condition
\[
q^{(0)}_\ell= C_\varphi^{-1}\|\varphi^{(0)}_{\ell}\|_{L^\infty} \quad \text{with }C_\varphi=\sum_{\ell=1}^\infty \|\varphi^{(0)}_\ell\|_{L^\infty}\in(0,\infty).
\]
For all $n\geq1$ we have  
\begin{align*}
|\langle \cP_0^n\varphi,\psi\rangle|&=\left|\sum_{\ell=1}^\infty\langle \varphi_\ell^{(n)},\psi_\ell\rangle\right|\geq  \left|\langle \varphi_1^{(n)},\psi_1\rangle\right| \ge \|\varphi_{1}^{(n)}\|_{L^\infty}\|\psi_{1}\|_{L^\infty}\geq C_\varphi q_1^{(n)} \|\psi_{1}\|_{L^\infty}.
\end{align*}
 The first inequality is because 
 $\varphi^{(n)}_\ell$ $(\ell\geq1)$ are all positive or all negative and $\psi_\ell$ $(\ell\geq1)$ are all positive or all negative,
which follows from  
 the strict monotonicity of $\cP_0^n\varphi$ and that of $\psi$ 
 as in Lemma~\ref{lm:HolderEst}(b). The second inequality is because  $\varphi_1^{(n)}$ and $\psi_1$ are both non-zero multiples of $\chi_{1,0}$ and
$\|\varphi_{1}^{(n)}\|_{L^\infty}= C_\varphi q_{1}^{(n)}$ by the last claim of Lemma~\ref{dominate-lem}.
Finally, from \eqref{prob-eq1} and Lemma~\ref{lm:num} there exists a constant $c=c(\varphi)>0$ such that
$q_1^{(n)}\ge c n^{-3/2}$
for every $n\geq1$.
Hence the desired inequality holds.
\end{proof}

\subsection{Proof of Theorem~\ref{Th2}}\label{pfthm-sec}
Let $\varphi,\psi\in  C^\theta([0,1])\cap L^2_0([0,1])$. For $n\geq0$ 
write $\cP_0^n\varphi=\sum_{\ell=1}^\infty\varphi_\ell^{(n)}$, $\varphi_\ell^{(n)}\in H_\ell$ and
$\psi=\sum_{\ell=1}^\infty\psi_\ell$, $\psi_\ell\in H_\ell$.
Let $(q_\ell^{(n)})_{n=0,\ell=1}^\infty$ be given by \eqref{transition3} and the initial condition \eqref{initial-d}. 
We obtain
\begin{align*}
|\langle \cP_0^n \varphi,\psi\rangle|
=& \left|\sum_{\ell=1}^\infty \langle \varphi^{(n)}_\ell, \psi_{\ell}\rangle \right|
\le \sum_{\ell=1}^\infty \| \varphi^{(n)}_\ell\|_{L^\infty} \|\psi_{\ell}\|_{L^\infty}\\
\leq& C_\varphi \|\psi\|_{C^\theta}\sum_{\ell=1}^\infty q_\ell^{(n)}2^{-\theta\ell}=C_\varphi\|\psi\|_{C^\theta}\sum_{\ell=1}^\infty  
\sum_{\substack{\ell'\geq1\\ |\ell'-\ell|\leq n}}
q^{(0)}_{\ell'}p^{(n)}_{\ell',\ell}2^{-\theta\ell}\\
\leq& C_\varphi\|\psi\|_{C^\theta}\left(\sum_{\max\{\ell,\ell'\}<n^{1/4}} \sum_{\substack{\ell'\geq1\\ |\ell'-\ell|\leq n}}
q^{(0)}_{\ell'}p^{(n)}_{\ell',\ell}2^{-\theta\ell}+\sum_{\ell\geq n^{1/4}}2^{-\theta\ell}\right)\\
\leq&\|\varphi\|_{C^\theta}\|\psi\|_{C^\theta}\left(\sum_{\max\{\ell,\ell'\}<n^{1/4}} \sum_{\substack{\ell'\geq1\\ |\ell'-\ell|\leq n}}C_0n^{-3/2}\ell\ell'2^{-\theta(\ell+\ell')}+\sum_{\ell\geq n^{1/4}}2^{-\theta\ell}\right)\\
\leq& C n^{-3/2} \|\varphi\|_{C^\theta} \|\psi\|_{C^\theta},
\end{align*}
provided that the constant $C$ is sufficiently large
depending only on $\theta$.
For the second inequality we have used
  Lemma~\ref{lm:HolderEst}(a) to bound $\|\psi_\ell\|_{L^\infty}$, and Lemma~\ref{dominate-lem} to bound $\|\varphi_\ell^{(n)}\|_{L^\infty}$. For the fourth inequality we have used Lemma~\ref{lm:num} and
 $q^{(0)}_{\ell'}\leq C_\varphi^{-1}2^{-\theta\ell'}\|\varphi\|_{C^\theta}$ that follows from \eqref{initial-d} and Lemma~\ref{lm:HolderEst}(a).
 This upper bound and
the lower bound in Lemma~\ref{lm:lower} complete the proof of Theorem~\ref{Th2}. 
\qed

\section{Proof of the Main Theorem in the case $M=2$}\label{sec:pf}
In this section we complete the proof of the Main Theorem for the case $M=2$ using the original Perron-Frobenius operator $\cP$ in \eqref{PF}. We take a pair $(\varphi,\psi)$ of functions in $C^\theta([0,1]^3)$ and estimate their correlation
\[
\mathrm{Cor}(\varphi,\psi\circ f^n)=
\left|\langle \cP^n \varphi,\psi\rangle - \langle \varphi,1\rangle\langle \psi,1\rangle\right|.
\]
Since the correlation does not change even if we subtract constant functions from $\varphi$ and $\psi$, we may 
assume $\langle \varphi,1\rangle=\langle \psi,1\rangle=0$ without loss of generality.

The argument below is mostly parallel to that of the proof of Theorem~\ref{Th2} in Section~$2$, but has some important differences. The obvious difference is that  $\cP$ acts on the space of functions on $[0,1]^3$.  
In Section~\ref{decomp-sec} we introduce a few notations that we will work with. One important difference from Section~$2$ is that instead of the
$L^\infty$ norm we work with an anisotropic norm $\|\cdot\|_{\theta/2}$ introduced 
in Section~\ref{norm-section}. In
Section~\ref{action-sec} and Section~\ref{adecomp} we analyze the action of $\cP$,
and complete the proof of the Main Theorem in Section~\ref{pf-main}.


\subsection{A decomposition of the Hilbert space}\label{decomp-sec}
In order to trace the argument in Section~$2$, we regard functions on $[0,1]^3$ as 
tensor products of functions of 
 two variables $(x_u,x_s)\in [0,1]^2$ and
functions of one variable $x_c\in [0,1]$. For $u\in L^2([0,1]^2)$ and $v\in L^2([0,1])$, we write $u\otimes v\in L^2([0,1]^3)$ for the function 
\begin{equation}\label{tensor}
u\otimes v(x_u,x_c,x_s)=u(x_u,x_s)v(x_c).
\end{equation}
For $d=2,3$ we set 
\[
L^2_0([0,1]^d)=\{u\in L^2([0,1]^d \colon \langle u,1\rangle =0\}.
\]
We are going to consider the operator $\cP$ acting on the Hilbert space $L^2_0([0,1]^3)$.

We write $\chi_0$ for the constant function on $[0,1]$ that takes value $1$.
Recall that the collection of functions $\{\chi_{0}\}\cup\{\chi_{\ell,k}\colon\ell\ge 1, 0\le k< 2^{\ell-1}\}$ is an  orthogonal basis of $L^2([0,1])$. 
For each $\ell\geq0$, let $\hat H_\ell$ denote the infinite dimensional subspace of $L^2_0([0,1]^3)$ given by 
\begin{align*}
\hat H_\ell&=\left\{\sum_{0\leq k<2^{\ell-1}} u_{\ell,k}\otimes \chi_{\ell,k}\colon u_{\ell,k}\in L^2_0([0,1]^2)\right\}\ \text{ if $\ell\geq1$,}
\intertext{and
}
\hat H_0&=\{ u_{0}\otimes \chi_{0}\colon u_{0} \in L^2_0([0,1]^2)\}.
\end{align*}

The space $L^2_0([0,1]^3)$ is decomposed into the mutually orthogonal subspaces $\hat H_\ell$: 
\[
L^2_0([0,1]^3)=\overline{\bigoplus_{\ell\ge 0} \hat H_\ell}.
\]
Unlike the space $L_0^2([0,1])$ in \eqref{Hdec}
we now have the component $\hat H_0$. This is the main point to be worked out 
in Section~\ref{adecomp}.

 Under the notation \eqref{tensor},
  each function $\varphi\in L_0^2([0,1]^3)$ is uniquely written as 
\begin{equation}\label{decomp}
\varphi=\varphi_{0,0}\otimes \chi_0+\sum_{\ell=1}^{\infty}\sum_{0\leq k<2^{\ell-1} } \varphi_{\ell,k}\otimes  \chi_{\ell,k},
\end{equation}
so that $\varphi_{0,0}\otimes \chi_0\in \hat H_0$ and $
\sum_{0\leq k<2^{\ell-1} } \varphi_{\ell,k}\otimes  \chi_{\ell,k}\in \hat H_\ell$,  where
\begin{align*}
\varphi_{0,0}(x_u,x_s)&=\int_{[0,1]} \varphi(x_u,t,x_s) {\rm d}t\ \text{ and }\\
\varphi_{\ell,k}(x_u,x_s)&=2^{\ell-1}\int_{[0,1]} \varphi(x_u,t,x_s) \chi_{\ell,k}(t) {\rm d}t.
\end{align*}
The same reasoning as in the proof of Lemma~\ref{lm:HolderEst}(a) yields the estimates
\[
\|\varphi_{0,0}\|_{L^\infty}\le  \|\varphi\|_{C^\theta},
\]
and
\[
\sup_{0\leq k<2^{\ell-1}}\|\varphi_{\ell,k}\|_{L^\infty}\le  2^{-\theta\ell} \|\varphi\|_{C^\theta}\ \text{ for all }\ell\geq1.
\]

\subsection{
Anisotropic norms}\label{norm-section}
For an element $u(x_u,x_s)$ of $L^2([0,1]^2)$, we write $u(\cdot,x_s)$ for the function $x_u\mapsto u(x_u,x_s)$ with the second variable $x_s$ fixed.
Let $0<\theta<1$. 
We define $\hat{C}^\theta([0,1]^2)\subset L^2([0,1]^2)$ as the subspace of functions $u(x_u,x_s)\in L^\infty([0,1]^2)$ such that the function $u(\cdot,x_s)$ belongs to $C^\theta([0,1])$ for all $x_s\in [0,1]$. For a function 
$\hat{C}^\theta([0,1]^2)$ and $x_s\in [0,1]$, we define
\[
|u(\cdot,x_s)|_{C^{\theta}}=
\sup_{\substack{x_u,x'_u\in[0,1]\\x_u\neq x'_u }} 
\frac{|u(x_u,x_s)-u(x'_u,x_s)|}{|x_u-x'_u|^{\theta}}.
\]

In the same spirit as that of the anisotropic norms in \cite{BGL02}, we introduce the following norm on the space $\hat{C}^\theta([0,1]^2)$:
\begin{equation}\label{norm-def}
\|u\|_{\theta/2}= |u|_\infty+|u|_{\theta/2}
\end{equation}
where 
\[
|u|_{\infty}=\int_{[0,1]}\|u(\cdot,x_s)\|_{L^\infty} {\rm d}x_s
\ \text{ and } \ 
|u|_\theta=\int_{[0,1]}|u(\cdot,x_s)|_{C^\theta} {\rm d}x_s.
\]
Note that the subscript in \eqref{norm-def} is $\theta/2$ and not $\theta$, which is intentional. 
Clearly we have 
\begin{equation}\label{eq:ellone}
    \|u\|_{L^1}\le |u|_{\infty}\le \|u\|_{\theta/2}.
\end{equation}

For each $\ell\geq 1$, we define a subspace $\hat H_{\ell,\hat{C}^{\theta}}$ of $\hat H_\ell$ by
\begin{align*}
\hat H_{\ell,\hat{C}^{\theta}}&=\left\{\sum_{0\leq k<2^{\ell-1}}u_{\ell,k}\otimes\chi_{\ell,k}\colon u_{\ell,k}\in \hat{C}^\theta([0,1]^2)\right\}.
\intertext{
For the case $\ell=0$, we set }
\hat H_{0,\hat{C}^{\theta}}&=\left\{u_{0,0}\otimes\chi_{0}\colon u_{0,0}\in \hat{C}^\theta([0,1]^2)\cap L^2_0([0,1]^2)\right\}.
\end{align*}
 


By definition, each $u\in \hat H_{\ell, \hat{C}^\theta}$ is written in the form 
\begin{align*}
u&=\sum_{0\leq k<2^{\ell-1}}u_{\ell,k}\otimes  \chi_{\ell,k} \quad \text{ if } \ell\geq1
\intertext{and}
u&=u_{0,0}\otimes \chi_{0} \quad\text{ if } \ell=0
\end{align*}
with $u_{\ell,k}\in \hat{C}^{\theta}([0,1]^2)$.
We define a norm $\|\cdot\|_{\theta/2}^{(\ell)}$ on $\hat H_{\ell, \hat{C}^\theta}$ by
\[\label{eq:maxnorm}\|u\|_{\theta/2}^{(\ell)}=\begin{cases}\max_{0\le k<2^{\ell-1}}\|u_{\ell,k}\|_{\theta/2}&\quad\text{ if } \ell\geq1,\\
\|u_{0,0}\|_{\theta/2}&\quad\text{ if } \ell=0.\end{cases}\]

\begin{lemma}\label{norm-new}
For $\varphi\in  C^\theta([0,1]^3)$, we write
$\varphi=\sum_{\ell=0}^\infty\varphi_\ell$ with $\varphi_\ell\in\hat H_\ell$.  Then we have 
\[\|\varphi_\ell\|_{L^1}\leq \|\varphi_\ell\|_{\theta/2}^{(\ell)}\ \text{ for all }\ell\geq0.\]\end{lemma}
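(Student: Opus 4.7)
The plan is to exploit the tensor structure of $\varphi_\ell$ together with the disjoint supports of the Haar components $\chi_{\ell,k}$ (for fixed $\ell$), and then to chain the trivial bound $\|u\|_{L^1}\leq|u|_\infty\leq\|u\|_{\theta/2}$ from \eqref{eq:ellone} with an elementary counting argument.

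First I would dispatch the case $\ell=0$. Since $\chi_0\equiv1$, we have $\varphi_0(x_u,x_c,x_s)=\varphi_{0,0}(x_u,x_s)$ independent of $x_c$. Fubini gives $\|\varphi_0\|_{L^1}=\|\varphi_{0,0}\|_{L^1}$, and \eqref{eq:ellone} immediately yields $\|\varphi_{0,0}\|_{L^1}\leq\|\varphi_{0,0}\|_{\theta/2}=\|\varphi_0\|_{\theta/2}^{(0)}$.

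For $\ell\geq 1$, the essential observation is that the supports $\mathrm{supp}(\chi_{\ell,k})=[k2^{-\ell+1},(k+1)2^{-\ell+1}]$ for $0\leq k<2^{\ell-1}$ are pairwise disjoint (up to measure zero) and $|\chi_{\ell,k}|=1$ on its support, so $\int_{[0,1]}|\chi_{\ell,k}(x_c)|\,{\rm d}x_c=2^{-\ell+1}$. Fixing $(x_u,x_s)$ and integrating in $x_c$ first, the disjointness gives
\begin{equation*}
\int_{[0,1]}\Bigl|\sum_{0\le k<2^{\ell-1}}\varphi_{\ell,k}(x_u,x_s)\chi_{\ell,k}(x_c)\Bigr|\,{\rm d}x_c=2^{-\ell+1}\sum_{0\le k<2^{\ell-1}}|\varphi_{\ell,k}(x_u,x_s)|.
\end{equation*}
Integrating in $(x_u,x_s)$ and applying \eqref{eq:ellone} to each $\varphi_{\ell,k}$ yields
\begin{equation*}
\|\varphi_\ell\|_{L^1}=2^{-\ell+1}\sum_{0\le k<2^{\ell-1}}\|\varphi_{\ell,k}\|_{L^1}\leq 2^{-\ell+1}\sum_{0\le k<2^{\ell-1}}\|\varphi_{\ell,k}\|_{\theta/2}.
\end{equation*}
There are exactly $2^{\ell-1}$ terms, each bounded by $\max_k\|\varphi_{\ell,k}\|_{\theta/2}=\|\varphi_\ell\|_{\theta/2}^{(\ell)}$, so the right-hand side is at most $2^{-\ell+1}\cdot 2^{\ell-1}\cdot\|\varphi_\ell\|_{\theta/2}^{(\ell)}=\|\varphi_\ell\|_{\theta/2}^{(\ell)}$, which is the desired inequality.

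No real obstacle is anticipated: the statement is a bookkeeping lemma whose only content is that the normalization in the definition of $\|\cdot\|_{\theta/2}^{(\ell)}$ (taking a maximum over $k$, rather than summing) is compensated exactly by the $2^{-\ell+1}$ factor from $\int|\chi_{\ell,k}|$ together with the number $2^{\ell-1}$ of Haar components at level $\ell$. The only subtlety to keep in mind is that the bound uses $|u|_\infty$ (the $L^1$--in--$x_s$, $L^\infty$--in--$x_u$ mixed norm) rather than the full $L^\infty$ norm, but since $\|u\|_{L^1}\leq|u|_\infty$ this suffices and the $C^\theta$ seminorm part of $\|\cdot\|_{\theta/2}$ plays no role here.
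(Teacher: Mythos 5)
Your proof is correct and follows essentially the same route as the paper: both exploit the disjointness of the supports of the $\chi_{\ell,k}$ (the paper's reference to \eqref{sum-one}) together with the elementary bound \eqref{eq:ellone}. You simply spell out the intermediate computation $\|\varphi_\ell\|_{L^1}=2^{-\ell+1}\sum_k\|\varphi_{\ell,k}\|_{L^1}$ that the paper compresses into the single inequality $\|\varphi_\ell\|_{L^1}\le\max_k\|\varphi_{\ell,k}\|_{L^1}$.
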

\begin{proof}
We have $\varphi_\ell=\sum_{0\leq k<2^{\ell-1}}\varphi_{\ell,k}\otimes\chi_{\ell,k}$ for $\ell\geq1$ and
$\varphi_0=\varphi_{0,0}\otimes\chi_0$.
For $\ell\geq1$, using \eqref{sum-one} and \eqref{eq:ellone}, we see
\[\|\varphi_\ell\|_{L^1}\leq \max_{0\leq k<2^{\ell-1}}\|\varphi_{\ell,k}\|_{L^1}\leq \max_{0\leq k<2^{\ell-1}}\|\varphi_{\ell,k}\|_{\theta/2}
=\|\varphi_{\ell}\|_{\theta/2}^{(\ell)}.\]
Using \eqref{eq:ellone} we also have
$\|\varphi_0\|_{L^1}\leq \|\varphi_{0,0}\|_{L^1}
= \|\varphi_{0,0}\|_{\theta/2}^{(0)}$.
\end{proof}

Below we prove two technical lemmas on the new norm $\|\cdot\|_{\theta/2}^{(\ell)}$
that will be used later.
The next lemma is an analogue of Lemma~\ref{lm:HolderEst}(a). \begin{lemma}\label{norm-bound-lem}
Let $\varphi\in  C^\theta([0,1]^3)\cap L^2_0([0,1]^3)$ and write
$\varphi=\sum_{\ell=0}^\infty\varphi_\ell$, $\varphi_\ell\in\hat H_\ell$.  Then we have 
\[
\|\varphi_{\ell}\|_{\theta/2}^{(\ell)}\le 2\cdot 2^{-(\theta/2) \ell}\|\varphi\|_{C^\theta}\ \text{ for all }\ell\geq0.
\]
\end{lemma}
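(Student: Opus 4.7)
The plan is to bound $\|\varphi_{\ell,k}\|_{\theta/2}$ uniformly in $k$ for each $\ell\ge 1$ and to handle $\ell=0$ separately. For $\ell=0$, since $\varphi_{0,0}(x_u,x_s)=\int_{[0,1]}\varphi(x_u,t,x_s)\,\mathrm dt$, Hölder continuity in $x_u$ is inherited directly from $\varphi$, so $|\varphi_{0,0}|_\infty\le\|\varphi\|_{C^\theta}$ and $|\varphi_{0,0}(\cdot,x_s)|_{C^{\theta/2}}\le\|\varphi\|_{C^\theta}$ (using $|x_u-x'_u|\le 1$ to pass from exponent $\theta$ to $\theta/2$). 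Adding the two pieces gives $\|\varphi_{0,0}\|_{\theta/2}\le 2\|\varphi\|_{C^\theta}=2\cdot 2^{-(\theta/2)\cdot 0}\|\varphi\|_{C^\theta}$, as desired.

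For $\ell\ge 1$ I would first reproduce the telescoping identity from the proof of Lemma~\ref{lm:HolderEst}(a), pairing each $t\in[k2^{-(\ell-1)},k2^{-(\ell-1)}+2^{-\ell}]$ with $t+2^{-\ell}$ on which $\chi_{\ell,k}$ takes the opposite sign, obtaining
\[
\varphi_{\ell,k}(x_u,x_s)=2^{\ell-1}\int_{k2^{-(\ell-1)}}^{k2^{-(\ell-1)}+2^{-\ell}}\bigl(\varphi(x_u,t,x_s)-\varphi(x_u,t+2^{-\ell},x_s)\bigr)\,\mathrm dt.
\]
Bounding the integrand by $\|\varphi\|_{C^\theta}(2^{-\ell})^{\theta}$ yields $|\varphi_{\ell,k}|_\infty\le\tfrac{1}{2}2^{-\theta\ell}\|\varphi\|_{C^\theta}\le 2^{-(\theta/2)\ell}\|\varphi\|_{C^\theta}$.

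The main step is the Hölder-in-$x_u$ bound, which I would obtain by \emph{interpolating} two different estimates for $\Delta:=\varphi_{\ell,k}(x_u,x_s)-\varphi_{\ell,k}(x'_u,x_s)$. The first, obtained by estimating the defining integral $\varphi_{\ell,k}=2^{\ell-1}\int\varphi\cdot\chi_{\ell,k}\,\mathrm dt$ directly with Hölder continuity of $\varphi$ in $x_u$, and using $\int|\chi_{\ell,k}|=2^{-(\ell-1)}$, gives the $\ell$-independent bound $|\Delta|\le\|\varphi\|_{C^\theta}|x_u-x'_u|^{\theta}$. The second, applying the telescoping identity above at both $x_u$ and $x'_u$ and using Hölder continuity of $\varphi$ in $x_c$, gives the $(x_u,x'_u)$-independent bound $|\Delta|\le 2^{-\theta\ell}\|\varphi\|_{C^\theta}$. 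Taking the geometric mean $\min\{A,B\}\le\sqrt{AB}$ of these yields
\[
|\Delta|\le 2^{-(\theta/2)\ell}|x_u-x'_u|^{\theta/2}\|\varphi\|_{C^\theta},
\]
so that $|\varphi_{\ell,k}(\cdot,x_s)|_{C^{\theta/2}}\le 2^{-(\theta/2)\ell}\|\varphi\|_{C^\theta}$ for every $x_s$. Integrating in $x_s$ bounds the $|\cdot|_{\theta/2}$ component by the same quantity, and summing the two pieces of the norm delivers $\|\varphi_{\ell,k}\|_{\theta/2}\le 2\cdot 2^{-(\theta/2)\ell}\|\varphi\|_{C^\theta}$; maximizing over $k$ concludes the $\ell\ge 1$ case.

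The only genuinely non-routine step is the interpolation: neither of the two individual estimates for $\Delta$ by itself is enough (the direct bound has no $\ell$-decay, the telescoping bound has no $x_u$-smallness), and it is precisely the geometric-mean compromise that simultaneously produces the anisotropic Hölder exponent $\theta/2$ and the geometric factor $2^{-(\theta/2)\ell}$, thereby explaining the subscript $\theta/2$ in the norm $\|\cdot\|_{\theta/2}^{(\ell)}$.
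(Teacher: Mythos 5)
Your proof is correct and follows essentially the same route as the paper: the key step in both is the interpolation between the $x_u$-H\"older bound $|\Delta|\le |\varphi|_{C^\theta}|x_u-x'_u|^\theta$ and the telescoping $x_c$-bound $|\Delta|\le 2^{-\theta\ell}|\varphi|_{C^\theta}$, combined via $\min\{A,B\}\le\sqrt{AB}$ to produce the exponent $\theta/2$ and the decay factor $2^{-(\theta/2)\ell}$. Your treatment of the $\ell=0$ case (using $|x_u-x'_u|\le 1$) matches what the paper leaves as "a similar argument," and the extra factor $\tfrac12$ you note in the $|\cdot|_\infty$ bound is a harmless tightening.
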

\begin{proof}
 Let $\varphi_{0,0}$ and  $\varphi_{\ell,k}$ be the 
coefficients in the expansion \eqref{decomp} 
of $\varphi$. It suffices to show that
 \[
\|\varphi_{0,0}\|_{\theta/2}\le 2 \|\varphi\|_{C^\theta}
\]
and
\[
\max_{0\leq k<2^{\ell-1}}\|\varphi_{\ell,k}\|_{\theta/2}\le 2\cdot 2^{-(\theta/2) \ell}\|\varphi\|_{C^\theta}\ \text{ for all }\ell\geq1.
\]

Let $\ell\ge 1$ and $0\le k<2^{\ell-1}$. An argument similar to the one in the proof of Lemma~\ref{lm:HolderEst} gives 
\[
|\varphi_{\ell,k}|_{\infty}\le 2^{-\theta\ell}|\varphi|_{C^\theta}. 
\]
Observe that    
\[
|\varphi_{\ell,k}|_{\theta/2}\le \sup_{x_s\in[0,1]} 
\sup_{\substack{x_u,x_u'\in[0,1] \\ x_u\neq x'_u}}
\frac{\left|\int_{[0,1]} (\varphi(x_u,x_c,x_s) -\varphi(x'_u,x_c,x_s))\cdot 2^{\ell-1} \chi_{\ell,k}(x_c){\rm d}x_c\right|}{|x_u-x'_u|^{\theta/2}}.
\]
From an obvious estimate
\[
|\varphi(x_u,x_c,x_s) -\varphi(x'_u,x_c,x_s)|\le |\varphi|_{C^\theta}\cdot |x_u-x'_u|^{\theta},
\]
we have 
 \[
 \left|\int_{[0,1]} (\varphi(x_u,x_c,x_s) -\varphi(x'_u,x_c,x_s))\cdot 2^{\ell-1} \chi_{\ell,k}(x_c){\rm d}x_c\right|\le |\varphi|_{C^\theta}\cdot  |x_u-x'_u|^{\theta}. 
 \]
 Besides, by an estimate similar to that in the proof of Lemma~\ref{lm:HolderEst}(a), we have 
 \[
 \left|\int_{[0,1]} (\varphi(x_u,x_c,x_s) -\varphi(x'_u,x_c,x_s))\cdot 2^{\ell-1} \chi_{\ell,k}(x_c){\rm d}x_c\right|\le  2^{-\theta \ell}|\varphi|_{C^\theta}. 
 \]
 Therefore we obtain   
\begin{align*}
    &\left|\int_{[0,1]} (\varphi(x_u,x_c,x_s) -\varphi(x'_u,x_c,x_s))\cdot 2^\ell \chi_{\ell,k}(x_c){\rm d}x_c\right|\\
    &\qquad \le \min\left\{ 2^{-\theta \ell}, |x_u-x'_u|^\theta\right\}\cdot|\varphi|_{C^\theta} \le  2^{-\theta\ell/2} |x_u-x'_u|^{\theta/2}\cdot|\varphi|_{C^\theta}.
\end{align*}
We conclude $\|\varphi_{\ell,k}\|_{\theta/2}\le 2\cdot 2^{-\theta\ell/2}|\varphi|_{C^\theta}$ as required. We obtain the same estimate on $\varphi_{0,0}$ by a similar argument. 
\end{proof}

\begin{lemma}\label{lm:rect} 
Let $I$ be a non-degenerate closed subinterval of $[0,1]$. Put 
$R=I\times[0,1]$ and suppose that $g\colon R\to\mathbb R^2$ is an invertible affine map of the form $g(x_u,x_s)=(\alpha x_u+p_u, \beta x_s+p_s)$ with $0<\beta<1<\alpha$ such that $g(R)=[0,1]\times J$ with $J\subset [0,1]$. For a function $u\in \hat{C}^\theta([0,1]^2)$, define
$\tilde u\colon [0,1]^2\to\mathbb R$ by \[
\tilde{u}(x_u,x_s)=\begin{cases}
    u\circ g^{-1}(x_u,x_s),&\text{ if $(x_u,x_s)\in g(R)$,}\\
    0,&\text{ otherwise.}
\end{cases}
\]
Then we have $\tilde{u}\in\hat{C}^\theta([0,1]^2)$ and 
\[
\|\tilde{u}\|_{\theta/2}\le |\beta| \|u\|_{\theta/2}.
\]
\end{lemma}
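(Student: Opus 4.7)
The plan is to split $\|\tilde u\|_{\theta/2}$ into its two summands $|\tilde u|_\infty$ and $|\tilde u|_{\theta/2}$ and to bound each separately by $\beta$ times the corresponding summand of $\|u\|_{\theta/2}$. Both bounds will rest on the same change of variables in the vertical direction: writing $y_s=g_2^{-1}(x_s)$ for $x_s\in J$, the Jacobian is $\beta$, since $g_2^{-1}\colon J\to[0,1]$ is affine with slope $\beta^{-1}$, and for $x_s\notin J$ the integrand vanishes because $\tilde u(\cdot,x_s)\equiv 0$.

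First I would note that for $x_s\in J$, the identity $\tilde u(x_u,x_s)=u(g_1^{-1}(x_u),y_s)$ holds for \emph{every} $x_u\in[0,1]$, with no internal jump to zero; this is where the hypothesis $g(R)=[0,1]\times J$ enters, guaranteeing that $g(R)$ covers the full interval in the $x_u$-direction. The restriction $g_1^{-1}\colon[0,1]\to I$ is affine with slope $\alpha^{-1}$.

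For the $|\cdot|_\infty$ part, $\|\tilde u(\cdot,x_s)\|_{L^\infty[0,1]}=\sup_{z_u\in I}|u(z_u,y_s)|\le\|u(\cdot,y_s)\|_{L^\infty[0,1]}$, so integrating over $J$ and changing variables gives $|\tilde u|_\infty\le\beta|u|_\infty$. For the $|\cdot|_{\theta/2}$ part, the affine contraction of $g_1^{-1}$ by factor $\alpha^{-1}$ yields the pointwise bound $|\tilde u(\cdot,x_s)|_{C^{\theta/2}[0,1]}\le\alpha^{-\theta/2}|u(\cdot,y_s)|_{C^{\theta/2}[0,1]}$; the right-hand seminorm is finite since $u(\cdot,y_s)\in C^\theta([0,1])\subset C^{\theta/2}([0,1])$. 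Because $\alpha>1$, $\alpha^{-\theta/2}\le 1$, and the same change of variables produces $|\tilde u|_{\theta/2}\le\beta|u|_{\theta/2}$. Adding the two inequalities closes the lemma.

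No step presents a genuine obstacle; the only care needed is tracking how the zero extension outside $g(R)$ interacts with the two pieces of the anisotropic norm, and making sure that the $x_s$-Jacobian yields the clean factor $\beta$ in both integrals. The design decision hidden in the statement is the use of the exponent $\theta/2$ rather than $\theta$ in the horizontal Hölder seminorm — for the present lemma it costs nothing, since $\alpha^{-\theta/2}\le 1$ already suffices, but this exponent is presumably what will later allow companion estimates (where the horizontal expansion $\alpha$ plays against a non-contracting vertical factor) to close with the tight rate.
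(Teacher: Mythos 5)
Your proof is correct and follows essentially the same route as the paper: decompose $\|\tilde u\|_{\theta/2}$ into $|\tilde u|_\infty$ and $|\tilde u|_{\theta/2}$, change variables in $x_s$ to produce the Jacobian factor $\beta$, and observe that the horizontal affine contraction by $\alpha^{-1}$ yields an extra factor $\alpha^{-\theta/2}\le 1$ on the H\"older seminorm part. Your explicit remark that $g(R)=[0,1]\times J$ means the zero extension occurs only in the $x_s$-direction (so $\tilde u(\cdot,x_s)$ is either identically zero or a full reparametrization of $u(\cdot,y_s)$, never a truncation creating a jump) is a useful clarification that the paper leaves implicit.
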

\begin{proof}
For  $(x_u,x_s)\in g(R)$, we set 
$g^{-1}(x_u,x_s)=(x_u',x_s')$.
By definition we have
\begin{align*}
|\tilde{u}|_\infty&=\int_J \|\tilde{u}(\cdot,x_s)\|_{L^\infty} {\rm d}x_s
\leq
\int_{[0,1]} \|u(\cdot,x_s')\|_{L^\infty} \beta{\rm d}x_s'
\le |\beta| |u|_\infty,
\end{align*}
and similarly 
\begin{align*}
|\tilde{u}|_{\theta/2}&=\int_J |\tilde{u}(\cdot,x_s)|_{C^{\theta/2}} {\rm d}x_s\\&\le  \int_{[0,1]} |\alpha|^{-\theta/2} |u(\cdot,x_s') |_{C^{\theta/2}} \beta{\rm d}x'_s
\le |\alpha|^{-\theta/2}|\beta|\, |u|_{\theta/2}\leq|\beta|\, |u|_{\theta/2}.
\end{align*}
These estimates yield the claim of the lemma.
\end{proof}

\subsection{The action of the Perron-Frobenius operator }\label{action-sec}
We give a description of the operator $\cP$ 
mostly parallel to that of $\cP_0$ in Subsection \ref{rPF-sec}. The precise formulae below does not look very simple but not difficult to check referring \textsc{Figure~\ref{hetero3}} and \textsc{Figure~\ref{action2}}.

We write the operator $\cP$ as the sum  
\begin{equation}\label{decomposition-P} 
\cP=\hat{\cP}_\alpha +\hat{\cP}_\beta,\end{equation}
where
$\hat{\cP}_\alpha$ and $\hat{\cP}_\beta$ are given as follows on each $\hat H_\ell$ with $\ell \ge 0$:
\begin{itemize}
    \item[(A)] On $\hat H_\ell$ with $\ell\ge 2$, we have 
\begin{align*}
\hat{\cP}_\alpha (u_{\ell,k}\otimes \chi_{\ell,k})(x_u,x_c,x_s)&= u_{\ell,k}
\left(\frac{x_u}{4},2x_s\right) \chi_{\ell+1,k}(x_c)\\
&\quad +
u_{\ell,k}\left(\frac{x_u+1}{4},2x_s\right)\chi_{\ell+1,k+2^{\ell-1}}(x_c),
\end{align*}
and  
\begin{align*}
&\hat{\cP}_\beta (u_{\ell,k}\otimes \chi_{\ell,k})(x_u,x_c,x_s)\\
&\qquad=
\begin{cases}\vspace{1mm}\displaystyle{u_{\ell,k}\left(\frac{x_u+1}{2},4x_s-2\right)  \chi_{\ell-1,k}(x_c)}& \text{if }0\le k<2^{\ell-2},\\
\displaystyle{u_{\ell,k}\left(\frac{x_u+1}{2},4x_s-3\right)  \chi_{\ell-1,k-2^{\ell-2}}(x_c)}& \text{if }
2^{\ell-2}\le k<2^{\ell-1}.
\end{cases}
\end{align*}

\item[(B)] On $\hat H_1$, we have
\begin{align*}
\hat{\cP}_\alpha (u_{1,0}\otimes\chi_{1,0})(x_u,x_c,x_s)&=u_{1,0}\left(\frac{x_u}{4},2x_s\right) \chi_{2,0}(x_c)\\
&\quad +
u_{1,0}\left(\frac{x_u+1}{4},2x_s\right) \chi_{2,1}(x_c)
\end{align*}
and 
\begin{align*}
\hat{\cP}_\beta (u_{1,0}\otimes\chi_{1,0})(x_u,x_c,x_s)&=
u_{1,0}\left(\frac{x_u+1}{2},4x_s-2\right)\chi_{0}(x_c)\\
&\quad -
u_{1,0}\left(\frac{x_u+1}{2},4x_s-3\right) \chi_{0}(x_c).
\end{align*}

\item[(C)] On $\hat H_0$, we have
\begin{align*}
&\hat{\cP}_\alpha(u_{0}\otimes\chi_{0})(x_u,x_c,x_s) =\frac{1}{2}\left(u_{0}\left(\frac{x_u}{4},2x_s\right)-u_{0}\left(\frac{x_u+1}{4},2x_s\right)\right)\chi_{1,0}(x_c)
\end{align*}
and
\begin{align*}
&\hat{\cP}_\beta(u_{0}\otimes\chi_{0})(x_u,x_c,x_s)\\
&\quad = 
\left(u_{0}\left(\frac{x_u+1}{2},4x_s-2\right)+u_{0}\left(\frac{x_u+1}{2},4x_s-3\right)\right) \chi_{0}(x_c).
\end{align*}
\end{itemize}
It is easy to check the following relation illustrated in \textsc{Figure~\ref{action2}}:
\begin{equation}\hat{\cP}_\alpha(\hat H_\ell)\subset \hat H_{\ell+1}\ \text{ and }\  \hat{\cP}_\beta(\hat H_\ell)\subset \hat H_{\max\{0,\ell-1\}}\ \text{ for all }\ell\geq0. \end{equation}

\begin{figure}
\begin{center}
\includegraphics[height=4cm,width=9cm]
{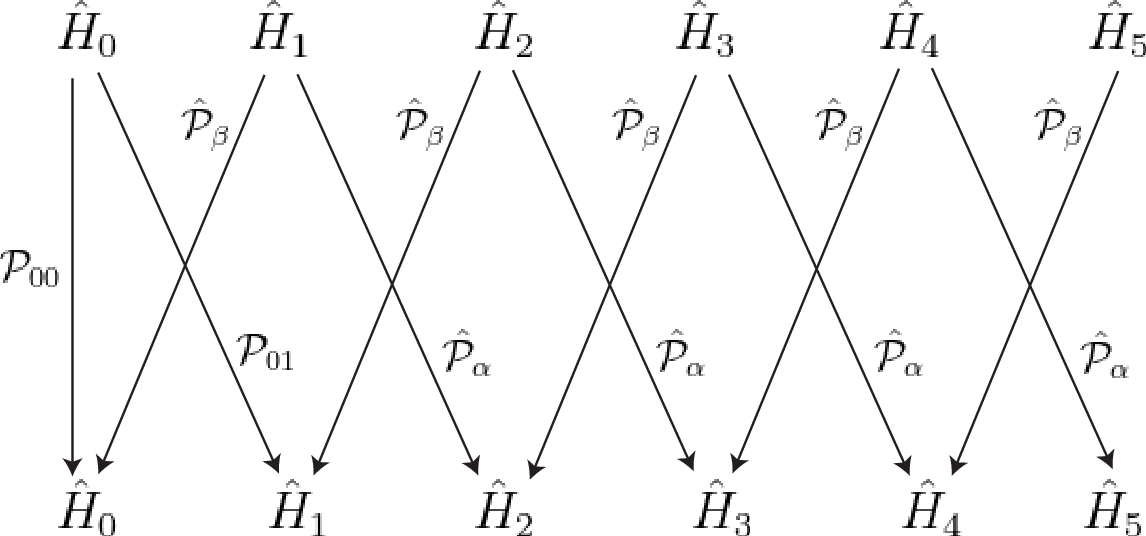}
\caption
{The actions of $\hat{\cP}_\alpha$, $\hat{\cP}_\beta$, $\cP_{01}$, $\cP_{00}$ on the components
$\hat H_\ell$ for $\ell\ge 0$. 
We have $\cP_{01}|_{\hat H_0}=\cP_\alpha|_{\hat H_0}$ and $\cP_{00}|_{\hat H_0}=\cP_\beta|_{\hat H_0}$.}\label{action2}
\end{center}
\end{figure}


As a direct consequence of the above explicit expressions of $\hat{\cP}_{\alpha}$, $\hat{\cP}_\beta$ and Lemma~\ref{lm:rect},
for the operator norm $\|\cdot\|$ with respect to the norm $\|\cdot\|_{\theta/2}^{(\ell)}$ on $\hat H_{\ell,\hat{C}^\theta}$, we have
\begin{equation}\label{lm:Ppm}
\left\|\hat{\cP}_{\alpha}|_{\hat H_{\ell,\hat{C}^\theta}}\right\|\le \frac{1}{2} \ \text{ and }\  \left\|\hat{\cP}_{\beta}|_{\hat H_{\ell,\hat{C}^\theta}}\right\|\le \frac{1}{2}\ \text{ for all $\ell\geq0$.}
\end{equation}

The next lemma is an analogue of Lemma~\ref{trans-lem}.

\begin{lemma}\label{cor:trans}
Let $\varphi\in C^\theta([0,1]^3)\cap L^2_0([0,1]^3)$, and 
for $n\ge 0$ write $\cP_*^n \varphi=\sum_{\ell=1}^\infty \varphi_\ell^{(n)}$ with
$\varphi_\ell^{(n)}\in \hat H_\ell$.
Then, for all $n\geq 1$ and all $\ell\ge 1$ we have  $\varphi_0^{(n)}=0$ and $\varphi_\ell^{(n)}\in \hat{H}_{\ell,\hat{C}^\theta}$, and
\[\|\varphi^{(n)}_{\ell}\|_{\theta/2}^{(\ell)}\le\begin{cases}\vspace{1mm}\displaystyle{\frac{1}{2} \|\varphi^{(n-1)}_{\ell-1}\|_{\theta/2}^{(\ell-1)} +
\frac{1}{2} \|\varphi^{(n-1)}_{\ell+1}\|_{\theta/2}^{(\ell+1)}}\ &\text{ if }\ell\geq2,\\
\displaystyle{\frac{1}{2} \|\varphi^{(n-1)}_{\ell+1}\|_{\theta/2}^{(1)}}
\ & \text{ if }\ell=1.\end{cases}\]
\end{lemma}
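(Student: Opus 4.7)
The plan is to argue by induction on $n$, following exactly the template of the proof of Lemma~\ref{trans-lem} in the reduced setting, but with two substitutions: the $L^\infty$ norm on each $H_\ell$ is replaced by the anisotropic norm $\|\cdot\|_{\theta/2}^{(\ell)}$ on $\hat H_{\ell,\hat C^\theta}$, and the elementary bounds $\|\cP_\alpha|_{H_\ell}\|,\|\cP_\beta|_{H_{\ell+1}}\|\le 1/2$ from \eqref{estimatePpm} are replaced by the operator-norm bounds \eqref{lm:Ppm} on $\hat{\cP}_\alpha,\hat{\cP}_\beta$.

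To carry this out, assume inductively that $\varphi^{(n-1)}_0=0$ and $\varphi^{(n-1)}_m\in\hat H_{m,\hat C^\theta}$ for every $m\ge 1$. From the splitting \eqref{decomposition-P} together with the shifting property $\hat{\cP}_\alpha(\hat H_m)\subset\hat H_{m+1}$ and $\hat{\cP}_\beta(\hat H_m)\subset\hat H_{\max\{0,m-1\}}$ recorded in Section~\ref{action-sec}, the $\hat H_\ell$-component of $\cP\bigl(\sum_{m\ge1}\varphi^{(n-1)}_m\bigr)$ equals $\hat{\cP}_\alpha\varphi^{(n-1)}_{\ell-1}+\hat{\cP}_\beta\varphi^{(n-1)}_{\ell+1}$ when $\ell\ge 2$, and equals $\hat{\cP}_\alpha\varphi^{(n-1)}_{0}+\hat{\cP}_\beta\varphi^{(n-1)}_{2}=\hat{\cP}_\beta\varphi^{(n-1)}_{2}$ when $\ell=1$, the last equality using the inductive hypothesis $\varphi^{(n-1)}_0=0$. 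Applying the triangle inequality together with \eqref{lm:Ppm} and taking a maximum over $k$ in the definition of $\|\cdot\|_{\theta/2}^{(\ell)}$ gives the stated recursion. Membership of $\varphi^{(n)}_\ell$ in $\hat H_{\ell,\hat C^\theta}$ is preserved because the explicit formulae (A)--(C) express $\hat{\cP}_\alpha,\hat{\cP}_\beta$ as sums of pull-backs by affine contraction/dilations in $(x_u,x_s)$, and Lemma~\ref{lm:rect} shows that such pull-backs act boundedly on $\hat C^\theta([0,1]^2)$.

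The only genuine subtlety is the assertion $\varphi^{(n)}_0=0$ for all $n\ge 1$. This is the three-dimensional counterpart of the vanishing $\cP_\beta(H_1)=\{0\}$ used in Section~\ref{rPF-sec}, but whereas the one-dimensional cancellation is automatic because $\cP_\beta$ on $H_1$ sends $\chi_{1,0}$ to a constant and hence to zero modulo $\langle 1\rangle$, the operator $\hat{\cP}_\beta$ genuinely sends a generic element $u_{1,0}\otimes\chi_{1,0}$ of $\hat H_1$ into a nonzero element of $\hat H_0$ (as is visible from formula (B)). The desired vanishing must therefore be built into the definition of the auxiliary operator $\cP_*$ introduced in Section~\ref{adecomp}: $\cP_*$ is the restriction of $\cP$ to $\bigoplus_{\ell\ge 1}\hat H_\ell$ composed with the projection discarding the $\hat H_0$-component. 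Once $\cP_*$ is so defined, its image lies automatically in $\bigoplus_{\ell\ge 1}\hat H_\ell$, and the above computation of components applies verbatim to $\cP_*(\sum_{m\ge 1}\varphi^{(n-1)}_m)$, closing the induction. I expect this bookkeeping of the $\hat H_0$ component, rather than any analytic estimate, to be the only delicate point; the remaining steps are essentially a bounded-operator rewriting of Lemma~\ref{trans-lem}.
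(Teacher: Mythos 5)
Your proof is correct and follows essentially the same route as the paper's (which simply cites Lemma~\ref{lm:rect}, the decomposition \eqref{decomposition-P}, and the estimates \eqref{lm:Ppm}): you run the induction that these ingredients implicitly support, tracking the $\hat H_\ell$-components of $\cP_*^n\varphi$ and using the norm bounds on $\hat{\cP}_\alpha,\hat{\cP}_\beta$. Your observation that the vanishing $\varphi_0^{(n)}=0$ is enforced by the projection $(I-\pi_0)$ built into $\cP_*$ — and is not, as in the reduced 1D case, an automatic cancellation of $\hat{\cP}_\beta$ on $\hat H_1$ — correctly identifies the one structural point worth flagging; just note that for the base case $n=1$ the hypothesis $\varphi_0^{(0)}=0$ need not hold, but this is harmless precisely because $\cP_*$ applies $(I-\pi_0)$ before $\cP$.
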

\begin{proof}The claims follow from Lemma~\ref{lm:rect},  the decomposition \eqref{decomposition-P} and the estimates \eqref{lm:Ppm}.\end{proof}


\subsection{Discrepancies between the reduced and the original operators}\label{adecomp}
The reduced Perron-Frobenius operator $\cP_0$
acts on the space  $L_0^2([0,1])=\overline{\bigoplus_{\ell\ge 1} H_{\ell}}$, while
the original Perron-Frobenius operator $\cP$ acts on the space  $L_0^2([0,1]^3)=\overline{\bigoplus_{\ell\ge 0} \hat H_{\ell}}$.
In order to analyze the action of $\cP$ we must take the transitions $\hat H_0\to \hat H_0$, 
$\hat H_0\to\hat H_1$, $\hat H_1\to \hat H_0$ into account:
Compare \textsc{Figure}~\ref{action1} and \textsc{Figure}~\ref{action2}.

Let $\pi_0\colon L_0^2([0,1]^3)\to \hat H_0$ denote the natural projection to the component $\hat{H}_0$. Let $I$ denote the identity operator on $L_0^2([0,1]^3)$. The operator $\cP$ is written as the sum  
\[
\cP=\cP_*+\cP_{10}+\cP_{01}+\cP_{00},
\]
where
\begin{align*}
    \cP_*&=(I-\pi_0)\circ\cP\circ (I-\pi_0),\\
    \cP_{01}&=(I-\pi_0)\circ\cP\circ\pi_0,\\
    \cP_{10}&=\pi_0\circ \cP\circ (I-\pi_0),\\
    \cP_{00}&=\pi_0\circ \cP\circ \pi_0.
\end{align*}
Note that
$\cP_{01}|_{\hat H_0}=\cP_\alpha|_{\hat H_0}$ and $\cP_{00}|_{\hat H_0}=\cP_\beta|_{\hat H_0}$ as in \textsc{Figure}~\ref{action2}. 

For the part $\cP_*$, we may proceed with the argument parallel to that for  $\cP_0$ in Section~\ref{sec2}. Indeed 
the next lemma is an analogue of Lemma~\ref{dominate-lem}.
\begin{lemma}\label{dominate-lem-2}Let $\varphi\in C^\theta([0,1]^3)\cap L^2_0([0,1]^3)$ and suppose $\varphi\not\equiv0$. We write 
\[
\varphi=\sum_{\ell=0}^\infty\varphi_\ell^{(0)}\quad\text{and}\quad 
\cP_*^n\varphi=\sum_{\ell=1}^\infty\varphi_\ell^{(n)}\quad\text{for $n\ge 1$}
\]
with $\varphi_\ell^{(n)}\in \hat H_{\ell,\hat{C}^\theta}$.
Let  $(q_\ell^{(n)})_{n=0,\ell=1}^\infty$ be given by \eqref{transition3} and the initial condition
\begin{equation}\label{initial-d-2}
q^{(0)}_\ell= \hat C_\varphi^{-1}\|\varphi^{(0)}_{\ell}\|_{\theta/2}^{(\ell)}\ \text{ and }\ \hat{C}_\varphi=\sum_{\ell=1}^\infty \|\varphi^{(0)}_\ell\|_{\theta/2}^{(\ell)}\in(0,\infty).\end{equation}
Then, for all $n\geq 0$ and $\ell \ge 1$, we have \[\label{dominate-eq-2}
\|\varphi_\ell^{(n)}\|_{\theta/2}^{(\ell)}\le \hat C_\varphi  q^{(n)}_\ell.
\]\end{lemma}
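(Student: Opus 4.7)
The plan is to mimic the proof of Lemma~\ref{dominate-lem} almost verbatim, with three substitutions: the $L^\infty$ norm is replaced everywhere by the anisotropic norm $\|\cdot\|_{\theta/2}^{(\ell)}$, Lemma~\ref{lm:HolderEst}(a) is replaced by its anisotropic analogue Lemma~\ref{norm-bound-lem}, and Lemma~\ref{trans-lem} is replaced by Lemma~\ref{cor:trans}.

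First I would check that $\hat C_\varphi$ is finite, which is immediate from Lemma~\ref{norm-bound-lem}: summing the geometric estimate $\|\varphi_\ell^{(0)}\|_{\theta/2}^{(\ell)} \le 2\cdot 2^{-(\theta/2)\ell}\|\varphi\|_{C^\theta}$ over $\ell\ge 1$ yields a convergent series. If $\hat C_\varphi=0$ then $\varphi_\ell^{(0)}=0$ for every $\ell\ge 1$, so $\varphi\in\hat H_0$, $\cP_*\varphi=0$, and the asserted bound is trivial; hence I may assume $\hat C_\varphi>0$, in which case \eqref{initial-d-2} defines a genuine probability distribution on $\mathbb N$.

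Next I would proceed by induction on $n$, exactly paralleling the induction in the proof of Lemma~\ref{dominate-lem}. The base case $n=0$ is an equality by the definition \eqref{initial-d-2} of $q_\ell^{(0)}$. For the inductive step, assuming the bound holds at some $n=k\ge 0$, Lemma~\ref{cor:trans} combined with the recursion \eqref{transition3} gives, for $\ell\ge 2$,
\[
\|\varphi_\ell^{(k+1)}\|_{\theta/2}^{(\ell)} \le \tfrac{1}{2}\|\varphi_{\ell-1}^{(k)}\|_{\theta/2}^{(\ell-1)} + \tfrac{1}{2}\|\varphi_{\ell+1}^{(k)}\|_{\theta/2}^{(\ell+1)} \le \hat C_\varphi\left(\tfrac{1}{2}q_{\ell-1}^{(k)}+\tfrac{1}{2}q_{\ell+1}^{(k)}\right) = \hat C_\varphi\, q_\ell^{(k+1)},
\]
and similarly $\|\varphi_1^{(k+1)}\|_{\theta/2}^{(1)} \le \tfrac{1}{2}\|\varphi_2^{(k)}\|_{\theta/2}^{(2)} \le \hat C_\varphi\, q_1^{(k+1)}$. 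Note that the $\varphi_0^{(0)}$ component plays no role in the recursion, since $\cP_*=(I-\pi_0)\cP(I-\pi_0)$ annihilates it at the outset, which is why $q_0^{(n)}$ does not appear.

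The argument is routine because the substantive work has already been done: the construction of the anisotropic norm in Section~\ref{norm-section}, the geometric estimate Lemma~\ref{lm:rect}, and the one-step contraction bound Lemma~\ref{cor:trans}. The conceptually delicate point, which would have been the main obstacle had it not already been addressed, is the choice of anisotropy parameter $\theta/2$ rather than $\theta$ in the norm: this is what allows Lemma~\ref{lm:rect} to absorb the expansion factor in the $x_u$-direction and produce the clean contraction constant $\tfrac{1}{2}$ in \eqref{lm:Ppm}, so that the comparison with the symmetric random walk with absorbing wall proceeds exactly as in Section~\ref{sec2}.
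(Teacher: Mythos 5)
Your proposal is correct and is essentially the paper's own proof, which simply notes finiteness of $\hat C_\varphi$ via Lemma~\ref{norm-bound-lem} and then performs the same induction using Lemma~\ref{cor:trans} and \eqref{transition3}, in direct parallel to Lemma~\ref{dominate-lem}. Your extra remark about the degenerate case $\hat C_\varphi=0$ (i.e.\ $\varphi\in\hat H_0\setminus\{0\}$, where $\cP_*\varphi=0$ and the bound is vacuous) is a sensible clarification, since the hypothesis $\varphi\not\equiv0$ alone does not force $\hat C_\varphi>0$, but it is not a departure from the paper's argument.
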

\begin{proof}
The boundedness of $\hat C_\varphi$ follows from Lemma~\ref{norm-bound-lem}.
One can prove 
the desired inequality by induction on $n$ using Lemma~\ref{cor:trans}, similarly to the proof of Lemma~\ref{dominate-lem}.\end{proof}
To treat actions of the components of $\cP$ other than $\cP_*$, we will use the following simple relation. 
\begin{lemma}\label{formula}
 For all $n\ge 1$ we have 
\[
\cP^n\circ (I-\pi_0)=\cP_*^n + \sum_{k=0}^{n-1} \cP^k\circ \cP_{10}\circ \cP_{*}^{n-k-1},\]
and
\[
\cP^n\circ\pi_0=\cP_{00}^n+\sum_{k=0}^{n-1}\cP^k\circ  \cP_{01}\circ \cP_{00}^{n-k-1}.
\]
\end{lemma}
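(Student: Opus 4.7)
The plan is to prove both identities by induction on $n$, resting on two algebraic observations that follow directly from the definitions. First, since $(I-\pi_0) + \pi_0 = I$, we have
\[
\cP\circ(I-\pi_0) = \cP_* + \cP_{10}
\qquad\text{and}\qquad
\cP\circ\pi_0 = \cP_{01} + \cP_{00},
\]
which supply the $n=1$ base cases of the two identities. Second, since $\cP_* = (I-\pi_0)\circ\cP\circ(I-\pi_0)$, the image of $\cP_*^m$ is contained in $(I-\pi_0)L_0^2([0,1]^3)$ for every $m\ge 1$; symmetrically, the image of $\cP_{00}^m$ lies in $\pi_0 L_0^2([0,1]^3)$. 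Both facts are immediate from $\pi_0^2 = \pi_0$ and a one-step computation.

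For the inductive step of the first identity, multiply the induction hypothesis on the left by $\cP$ to obtain
\[
\cP^{n+1}\circ(I-\pi_0) = \cP\circ\cP_*^n + \sum_{k=0}^{n-1}\cP^{k+1}\circ\cP_{10}\circ\cP_*^{n-k-1}.
\]
Because $\cP_*^n$ takes values in $(I-\pi_0)L_0^2([0,1]^3)$, I may rewrite
\[
\cP\circ\cP_*^n = \cP\circ(I-\pi_0)\circ\cP_*^n = (\cP_* + \cP_{10})\circ\cP_*^n = \cP_*^{n+1} + \cP_{10}\circ\cP_*^n.
\]
Reindexing the sum by $k\mapsto k+1$ then absorbs the extra term $\cP_{10}\circ\cP_*^n$ and yields the formula at level $n+1$. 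The second identity is proved by the same argument with $(I-\pi_0)$, $\cP_*$, $\cP_{10}$ replaced throughout by $\pi_0$, $\cP_{00}$, $\cP_{01}$, using now that $\cP_{00}^n$ lands in $\pi_0 L_0^2([0,1]^3)$ and that $\cP\circ\pi_0 = \cP_{00} + \cP_{01}$.

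There is no real obstacle: the formulas are bookkeeping identities of Duhamel type, expressing a length-$n$ trajectory of $\cP$ started in $(I-\pi_0)L_0^2$ as either one that never leaves that subspace (the $\cP_*^n$ term) or one for which we record the first transition to $\pi_0 L_0^2$ via $\cP_{10}$; for the second identity the sum records the last exit from $\pi_0 L_0^2$ via $\cP_{01}$. The only point that needs care is to apply the annihilation relations $\cP_{01}\circ(I-\pi_0) = \cP_{00}\circ(I-\pi_0) = 0$ and $\cP_*\circ\pi_0 = \cP_{10}\circ\pi_0 = 0$ at the right moments to discard the cross-terms that would otherwise appear upon expanding $\cP$ via its four-term decomposition. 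These vanishings are immediate from the defining sandwich formulas for $\cP_*, \cP_{10}, \cP_{01}, \cP_{00}$.
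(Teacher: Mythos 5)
Your proof is correct and takes essentially the same route as the paper: induction on $n$, with the base case coming from $\cP\circ(I-\pi_0)=\cP_*+\cP_{10}$ and $\cP\circ\pi_0=\cP_{00}+\cP_{01}$, and the inductive step using that the image of $\cP_*^m$ (resp.\ $\cP_{00}^m$) is killed by $\pi_0$ (resp.\ by $I-\pi_0$). The only cosmetic difference is that for the second identity the paper composes with $\cP$ on the right (peeling off $\cP\circ\pi_0$ and then applying the induction hypothesis on the left), whereas you compose on the left in both cases; this is an entirely equivalent bookkeeping choice.
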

\begin{proof}
The claims for $n=1$ are obvious. 
Suppose that the claims  hold for $n=m$. For the first claim, we have
\begin{align*}
    \cP^{m+1}\circ (I-\pi_0)&=\cP\circ 
    \left(\cP_*^m + \sum_{k=0}^{m-1} \cP^k\circ \cP_{10}\circ \cP_{*}^{m-k-1}\right)\\
    &=\cP\circ \cP_*^m + \sum_{k=0}^{m-1} \cP^{k+1}\circ \cP_{10}\circ \cP_{*}^{m-k-1}\\
    &=(\cP_*+\cP_{10})\circ \cP_*^m + \sum_{k=0}^{m-1} \cP^{k+1}\circ \cP_{10}\circ \cP_{*}^{m-k-1}\\
    &=\cP_*^{m+1} + \sum_{k=0}^{m} \cP^k\circ \cP_{10}\circ \cP_{*}^{m-k}.
\end{align*}
where, in the third equality, we have used the fact that the $\hat{H}_0$-component of the image of $\cP_*$ vanishes. For the second claim, we have
\begin{align*}
\cP^{m+1}\circ\pi_0&=\cP^{m}\circ (\cP_{00}+\cP_{01})\\
&=\left(\cP_{00}^m+\sum_{k=0}^{m-1}\cP^k\circ \cP_{01}\circ \cP_{00}^{m-k-1}\right)\circ \cP_{00}
 + \cP^m\circ \cP_{01}\\
&=\cP_{00}^{m+1}+\sum_{k=0}^{m}\cP^k\circ \cP_{01}\circ \cP_{00}^{m-k}.
\end{align*}
Thus we obtain the claims by induction on $m$. 
\end{proof}

For any pair $(\varphi,\psi)$ of functions in $C^\theta([0,1]^3)\cap L^2_0([0,1]^3)$,
we are going to estimate 
 $|\langle \cP^n \varphi,\psi\rangle|$, $n\ge 1$.
Contributions from $\cP_*$
can be treated in the same way as in Section~$2$. 

\begin{lemma}\label{upper-3d}There exists a constant $C_1>0$ such that for any pair $(\varphi,\psi)$ of functions in $ C^\theta([0,1]^3)\cap L^2_0([0,1]^3)$ such that $\varphi\not\equiv0$, we have
\[|\langle \cP_*^n \varphi,\psi\rangle|\leq
 C_1 n^{-3/2}\hat C_\varphi \|\psi\|_{C^\theta}\ 
\text{ for all }n\geq1,\]
and
 \[\| \cP_*^n \varphi\|_{L^1}\leq C_1 n^{-3/2}\hat C_\varphi\ \text{ for all }n\geq1\]
where $\hat{C}_\varphi>0$ is the constant in \eqref{initial-d-2}.\end{lemma}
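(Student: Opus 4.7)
\emph{Plan.} The argument is the three-dimensional analogue of the closing computation in the proof of Theorem~\ref{Th2} at the end of Section~\ref{sec2}, with the $L^\infty$ norm of that argument replaced throughout by the anisotropic norms $\|\cdot\|_{\theta/2}^{(\ell)}$ of Section~\ref{norm-section} and with Lemma~\ref{norm-new} serving as the bridge from $\|\cdot\|_{\theta/2}^{(\ell)}$ back to $L^1$.

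For the bilinear bound, expand $\psi=\sum_{\ell\ge 0}\psi_\ell$ with $\psi_\ell\in \hat H_\ell$. Because $\varphi_0^{(n)}=0$ for $n\ge 1$ by Lemma~\ref{cor:trans}, and the spaces $\hat H_\ell$ are mutually $L^2$-orthogonal,
\[
|\langle \cP_*^n\varphi,\psi\rangle|=\Bigl|\sum_{\ell\ge 1}\langle\varphi_\ell^{(n)},\psi_\ell\rangle\Bigr|\le \sum_{\ell\ge 1}\|\varphi_\ell^{(n)}\|_{L^1}\|\psi_\ell\|_{L^\infty}.
\]
I would then bound the first factor by $\|\varphi_\ell^{(n)}\|_{L^1}\le \|\varphi_\ell^{(n)}\|_{\theta/2}^{(\ell)}\le \hat C_\varphi q_\ell^{(n)}$ using Lemma~\ref{norm-new} and Lemma~\ref{dominate-lem-2}, and the second by $\|\psi_\ell\|_{L^\infty}\le 2^{-\theta\ell}\|\psi\|_{C^\theta}$, as noted right after \eqref{decomp} (the 3D analogue of Lemma~\ref{lm:HolderEst}(a)). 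Expanding $q_\ell^{(n)}=\sum_{\ell'} q_{\ell'}^{(0)}p_{\ell'\ell}^{(n)}$ and splitting the double sum over $(\ell,\ell')$ according to whether $\max\{\ell,\ell'\}\le n^{1/4}$, the small-indices part is controlled by Lemma~\ref{lm:num} (producing the factor $n^{-3/2}\ell\ell'$), and the tails decay faster than any polynomial thanks to the geometric estimates $q_{\ell'}^{(0)}\lesssim 2^{-\theta\ell'/2}$ from Lemma~\ref{norm-bound-lem} together with the factor $2^{-\theta\ell}$ on the $\psi$-side. This is exactly the computation that closes the proof of Theorem~\ref{Th2}, and it gives the claimed bound $C_1 n^{-3/2}\hat C_\varphi\|\psi\|_{C^\theta}$.

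For the $L^1$ bound, the same chain applies with the Hölder factor on $\psi$ removed:
\[
\|\cP_*^n\varphi\|_{L^1}\le\sum_{\ell\ge 1}\|\varphi_\ell^{(n)}\|_{L^1}\le\sum_{\ell\ge 1}\|\varphi_\ell^{(n)}\|_{\theta/2}^{(\ell)}\le\hat C_\varphi\sum_{\ell\ge 1}q_\ell^{(n)}.
\]
I would again write $\sum_\ell q_\ell^{(n)}=\sum_{\ell,\ell'}q_{\ell'}^{(0)}p_{\ell'\ell}^{(n)}$ and split by $\max\{\ell,\ell'\}\lessgtr n^{1/4}$; the exponential localization $q_{\ell'}^{(0)}\lesssim 2^{-\theta\ell'/2}\hat C_\varphi^{-1}\|\varphi\|_{C^\theta}$ (Lemma~\ref{norm-bound-lem}) restricts the effective range of $\ell'$ to $O(1)$ at any polynomial cost, and the transition estimate $p_{\ell'\ell}^{(n)}\le C_0 n^{-3/2}\ell\ell'$ of Lemma~\ref{lm:num} then delivers the $n^{-3/2}$ rate.

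\emph{Main obstacle.} I expect the $L^1$ bound to be the delicate step, because a purely probabilistic reading of $\sum_\ell q_\ell^{(n)}$ as the total surviving mass of the gambler's walk would yield only the rate $n^{-1/2}$; obtaining the sharper $n^{-3/2}$ requires genuinely combining the geometric localization of the initial profile $q_{\ell'}^{(0)}$ near $\ell'=O(1)$ with the boundary correction $p_{\ell'\ell}^{(n)}\asymp n^{-3/2}\ell\ell'$ of Lemma~\ref{lm:num} that appears only when both endpoints are small, and this interplay is the crux of the argument.
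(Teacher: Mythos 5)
Your treatment of the first (bilinear) inequality is essentially the paper's argument: expand in the $\hat H_\ell$ components, pass from $L^1$ to $\|\cdot\|_{\theta/2}^{(\ell)}$ via Lemma~\ref{norm-new}, dominate by $q_\ell^{(n)}$ via Lemma~\ref{dominate-lem-2}, extract the geometric decay in $\ell$ from the $\psi$-side via Lemma~\ref{norm-bound-lem} (the paper works with $\|\psi_\ell\|_{\theta/2}^{(\ell)}\lesssim 2^{-\theta\ell/2}$ while you use $\|\psi_\ell\|_{L^\infty}\lesssim 2^{-\theta\ell}$; both suffice), and close with Lemma~\ref{lm:num}. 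That part is fine.

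The gap is in your $L^1$ bound, and you put your finger on it yourself but then drew the wrong conclusion. The chain
\[
\|\cP_*^n\varphi\|_{L^1}\le\sum_{\ell\ge 1}\|\varphi_\ell^{(n)}\|_{L^1}\le\hat C_\varphi\sum_{\ell\ge 1}q_\ell^{(n)}
\]
cannot be pushed below $n^{-1/2}$, no matter how you organize the double sum. The quantity $\sum_{\ell\ge 1}q_\ell^{(n)}$ is the exact survival probability of the absorbed walk at time $n$, namely $\sum_{\ell'}q_{\ell'}^{(0)}\,\mathbb{P}_{\ell'}(T_0>n)$, and for fixed $\ell'$ one has $\mathbb{P}_{\ell'}(T_0>n)\asymp\ell'/\sqrt{n}$. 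The geometric localization of $q^{(0)}_{\ell'}$ only keeps $\ell'=O(1)$; it does nothing to truncate the sum over the arrival index $\ell$. The local estimate $p_{\ell'\ell}^{(n)}\asymp n^{-3/2}\ell\ell'$ from Lemma~\ref{lm:num} is valid only for $\ell\le n^{1/4}$; on the diffusive scale $\ell\sim\sqrt{n}$ one instead has $p_{\ell'\ell}^{(n)}\asymp\ell'/n$, and summing over that range already produces the $n^{-1/2}$ bulk. So the ``interplay'' you describe as the crux does not in fact yield $n^{-3/2}$; the triangle-inequality chain simply loses the geometric weight in $\ell$ that saved the bilinear estimate. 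In the bilinear case that weight came for free from $\psi$, and there is no analogue of it for the unweighted $L^1$ norm.

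What the downstream argument actually needs is more localized. In the Main Theorem, the second inequality of Lemma~\ref{upper-3d} is invoked only to bound $\|\cP_{10}\circ\cP_*^{\,n-k-1}\varphi\|_{L^1}$, and $\cP_{10}$ annihilates every $\hat H_\ell$-component with $\ell\neq 1$ (indeed $\cP_{10}u=\hat{\cP}_\beta(u_1)$). So the quantity that must decay like $n^{-3/2}$ is $\|\varphi_1^{(n)}\|_{L^1}\le\hat C_\varphi\,q_1^{(n)}$, and $q_1^{(n)}=\sum_{\ell'}q_{\ell'}^{(0)}p_{\ell'1}^{(n)}\lesssim n^{-3/2}$ does follow by pairing the geometric decay of $q^{(0)}_{\ell'}$ with $p_{\ell'1}^{(n)}\lesssim n^{-3/2}\ell'$. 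If you want to salvage your approach, bound $\|\varphi_1^{(n)}\|_{L^1}$ rather than $\|\cP_*^n\varphi\|_{L^1}$: the boundary-layer estimate of Lemma~\ref{lm:num} really does bite there because the arrival index is pinned at $\ell=1$.
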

\begin{proof}
We write
\[\cP_*^n\varphi=\sum_{\ell=1}^\infty \varphi_\ell^{(n)},\ \varphi_\ell^{(n)}\in \hat H_\ell\text{ for }n\geq1\ \text{ and } \
\psi=\sum_{\ell=0}^\infty \psi_\ell,\ \psi_\ell\in \hat H_\ell.\]
An argument similar to that in Section~\ref{pfthm-sec} for $\cP_0$ shows that 
\[\begin{split}
|\langle \cP_*^n \varphi,\psi\rangle|
=& \left|\sum_{\ell=1}^\infty \langle \varphi^{(n)}_\ell, \psi_{\ell}\rangle \right|
\le \sum_{\ell=1}^\infty \| \varphi^{(n)}_\ell\|_{L^1} \|\psi_{\ell}\|_{L^1}\le \sum_{\ell=1}^\infty \| \varphi^{(n)}_\ell\|_{\theta/2}^{(\ell)} \|\psi_{\ell}\|_{\theta/2}^{(\ell)}\\
\leq&2\hat C_\varphi \|\psi\|_{C^\theta}\sum_{\ell=1}^\infty q_\ell^{(n)}2^{-\theta\ell/2}
=2\hat C_\varphi\|\psi\|_{C^\theta}\sum_{\ell=1}^\infty  
\sum_{\substack{\ell'\geq1\\ |\ell'-\ell|\leq n}}
q^{(0)}_{\ell'}p^{(n)}_{\ell'\ell}2^{-\theta\ell/2}\\
\leq& 2\hat C_\varphi\|\psi\|_{C^\theta}\left(\sum_{\max\{\ell,\ell'\}<n^{1/4}} 
\sum_{\substack{\ell'\geq1 \\ |\ell'-\ell|\leq n}}
q^{(0)}_{\ell'}p^{(n)}_{\ell'\ell}2^{-\theta\ell/2}+\sum_{\ell\geq n^{1/4}}2^{-\theta\ell/2}\right)\\
\leq&2\hat C_\varphi\|\psi\|_{C^\theta}\left(\sum_{\max\{\ell,\ell'\}<n^{1/4}} 
\sum_{\substack{\ell'\geq1 \\ |\ell'-\ell|\leq n}}
C_0n^{-3/2}\ell\ell'2^{-\theta\ell/2}+\sum_{\ell\geq n^{1/4}}2^{-\theta\ell/2}\right)\\
\leq& C_1n^{-3/2}\hat C_\varphi \|\psi\|_{C^\theta},
\end{split}\]
provided that we let the constant $C_1$ be sufficiently large
depending only on $\theta$.
For the second inequality we have used \eqref{eq:ellone}.
For the third inequality we have used 
  Lemma~\ref{norm-bound-lem} to bound $\|\psi_\ell\|_{\theta/2}^{(\ell)}$, and Lemma~\ref{dominate-lem-2} to bound $\|\varphi^{(n)}_\ell\|_{\theta/2}^{(\ell)}$. For the fifth inequality we have used Lemma~\ref{lm:num} and
 $q^{(0)}_{\ell'}\leq1$. 
 A proof of the second inequality in the lemma is analogous.
\end{proof}

To treat 
contributions from the remaining terms, we will use the next lemma.
\begin{lemma}\label{eq:im}
Let $u\in L^2_0([0,1]^3)$ and $v\in C^{\theta}([0,1]^3)$. For all $n\geq0$ we have
\[|\langle \cP^n\circ \cP_{10}u,v\rangle|\le 
2^{-\theta n}\|u\|_{L^1}\|v\|_{C^\theta}.  
\]
\end{lemma}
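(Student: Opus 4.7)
The plan is to dualize, write $\cP_{10} u$ explicitly as an $x_c$-independent function with a strong cancellation in $x_s$, and then exploit the $x_s$-contraction of $f^n$.

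\textbf{Identification of $\cP_{10} u$.} By $\langle \cP w, v\rangle = \langle w, v\circ f\rangle$ the inequality is equivalent to
\[ |\langle \cP_{10} u, v\circ f^n\rangle| \le 2^{-\theta n}\|u\|_{L^1}\|v\|_{C^\theta}. \]
Because $\hat{\cP}_\alpha$ strictly raises the Haar index while $\hat{\cP}_\beta$ sends $\hat H_\ell$ into $\hat H_{\max\{0,\ell-1\}}$, the only contribution to the $\hat H_0$-part of $\cP(I-\pi_0)u$ comes from the $\hat H_1$-component $u_{1,0}\otimes\chi_{1,0}$ of $u$. The explicit formula for $\hat{\cP}_\beta$ on $\hat H_1$ from item (B) then gives $\cP_{10} u(x) = h(x_u, x_s)$ with
\[ h(x_u, x_s) = u_{1,0}\!\left(\tfrac{x_u+1}{2}, 4x_s-2\right)\mathbbm{1}_{[\frac12,\frac34)}(x_s) - u_{1,0}\!\left(\tfrac{x_u+1}{2}, 4x_s-3\right)\mathbbm{1}_{[\frac34,1)}(x_s). \]
Two direct changes of variable in $x_s$ produce the \emph{pointwise-in-$x_u$} cancellation $\int_0^1 h(x_u, x_s)\, dx_s = 0$ for every $x_u$, together with the $L^1$-bound $\|h\|_{L^1([0,1]^2)} \le \|u_{1,0}\|_{L^1} \le \|u\|_{L^1}$.

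\textbf{Contraction in $x_s$.} The branch of $f$ used at each step is determined by which $\Omega_\gamma = \Omega_\gamma^+\times[0,1]$ the point lies in, and $\Omega_\gamma^+\subset[0,1]^2$ depends only on $(x_u, x_c)$. Hence for fixed $(x_u, x_c)$ the map $x_s\mapsto f^n(x_u, x_c, x_s)$ is affine with contraction ratio at most $(1/2)^n$, and thus $W(x_u, x_s):=\int_0^1 v(f^n(x_u, x_c, x_s))\, dx_c$ is $\theta$-H\"older in $x_s$ uniformly in $x_u$, with H\"older constant at most $2^{-\theta n}\|v\|_{C^\theta}$.

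\textbf{Conclusion.} Since $\cP_{10} u$ is independent of $x_c$, the pairing collapses to $\int h(x_u, x_s) W(x_u, x_s)\, dx_u\, dx_s$. Using the pointwise cancellation to replace $W(x_u, x_s)$ by $W(x_u, x_s) - W(x_u, 0)$ and then applying the $x_s$-H\"older bound with $|x_s|\le 1$ gives
\[ |\langle \cP_{10} u, v\circ f^n\rangle| = \Bigl|\int h(x_u, x_s)\bigl(W(x_u, x_s) - W(x_u, 0)\bigr)\, dx_u\, dx_s\Bigr| \le 2^{-\theta n}\|v\|_{C^\theta}\,\|h\|_{L^1}, \]
which is $\le 2^{-\theta n}\|u\|_{L^1}\|v\|_{C^\theta}$. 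The main subtlety is recognizing the pointwise-in-$x_u$ cancellation of $h$: the weaker $\iint h = 0$ (automatic from $\cP_{10} u\in \hat H_0\subset L^2_0$) would only permit subtracting a scalar constant from $W$ and would leave an $x_u$-oscillation of $W$ that is not small under the $x_u$-expanding dynamics.
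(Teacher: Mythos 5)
Your proof is correct and is essentially the same argument as the paper's. The paper isolates the key mechanism as Sublemma~\ref{lm:ave} — any $u$ with $\int_0^1 u(x_u,x_c,x_s)\,{\rm d}x_s=0$ for a.e. $(x_u,x_c)$ pairs with $v\circ f^n$ to give a $2^{-\theta n}$ bound, via subtracting $\inf_{x_s}v(f^n(x_u,x_c,\cdot))$ fiberwise and using the $x_s$-contraction — and then observes that $\cP_{10}u=\hat\cP_\beta(u_1)$ satisfies that hypothesis. You inline the sublemma by writing $\cP_{10}u$ explicitly and performing the same fiberwise subtraction (of $W(x_u,0)$ instead of $\inf$), and you correctly emphasize that the cancellation must hold pointwise in the base, not just globally, precisely because the $x_u$-direction is expanding; this is exactly what the sublemma's hypothesis encodes.
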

For the proof of this lemma, we first show
\begin{sublemma}\label{lm:ave}
    Suppose $u\in L^2_0([0,1]^3)$ satisfies the condition 
    \[
    \int_{[0,1]} u(x_u,x_c,x_s) {\rm d}x_s=0\quad \text{for Lebesgue a.e. }(x_u,x_c)\in [0,1]^2.
    \]
    Then, for any $v\in C^\theta([0,1]^3)$ we have 
    \[
    |\langle \cP^n u,v\rangle|\le 2^{-\theta n} \|u\|_{L^1}\|v\|_{C^\theta}
    \quad \text{for all }n\ge 0.\]
    
\end{sublemma}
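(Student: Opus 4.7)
The plan is to exploit two structural features of $f=f_{\frac{1}{4},\frac{1}{4}}$: first, it is a skew product over the two-dimensional map $f_a$ acting on $(x_u,x_c)$, so the first two coordinates of $f^n(x_u,x_c,x_s)$ depend only on $(x_u,x_c)$; second, the $x_s$-direction is uniformly contracted at rate at most $\tfrac12$ per step, since on $\Omega_{\alpha_k}$ the contraction factor is $1-Mb=\tfrac12$ and on $\Omega_{\beta_k}$ it is $b=\tfrac14$. By induction, if two points $(x_u,x_c,x_s)$ and $(x_u,x_c,x_s')$ share the same $(x_u,x_c)$, then $f^n$ sends them to points with equal first two coordinates and with $x_s$-coordinates differing by at most $2^{-n}|x_s-x_s'|$.

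Next I would use the duality $\langle \cP^n u,v\rangle=\langle u,v\circ f^n\rangle$ together with the hypothesis $\int_{[0,1]} u(x_u,x_c,x_s)\,{\rm d}x_s=0$ for a.e.\ $(x_u,x_c)$ in order to subtract from $v\circ f^n$ any function depending only on $(x_u,x_c)$ without altering the integral. A convenient choice is $w(x_u,x_c):=v\bigl(f^n(x_u,x_c,\tfrac12)\bigr)$, which gives
\[
\langle \cP^n u,v\rangle=\int u(x_u,x_c,x_s)\bigl[v(f^n(x_u,x_c,x_s))-v(f^n(x_u,x_c,\tfrac12))\bigr]{\rm d}m.
\]
By the skew-product property and the contraction bound, the two points inside the bracket share their $(x_u,x_c)$-coordinates and have $x_s$-coordinates differing by at most $2^{-n}$, so the H\"older continuity of $v$ gives the pointwise estimate
\[
\bigl|v(f^n(x_u,x_c,x_s))-v(f^n(x_u,x_c,\tfrac12))\bigr|\le |v|_{C^\theta}\cdot 2^{-\theta n}.
\]

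Integrating $|u|$ against this bound over $[0,1]^3$ then yields $|\langle \cP^n u,v\rangle|\le 2^{-\theta n}\|u\|_{L^1}\|v\|_{C^\theta}$, as desired. The argument is essentially mechanical; the only step that warrants explicit verification is the uniform $\tfrac12$-contraction of the $x_s$-direction per iteration, which is immediate from the defining formulas of $f_{a,b}$ at $a=b=\tfrac14$ and compounds to $2^{-n}$ after $n$ steps. I do not anticipate a serious obstacle.
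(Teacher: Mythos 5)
Your proof is correct and takes essentially the same route as the paper: both use the duality $\langle\cP^n u,v\rangle=\langle u,v\circ f^n\rangle$, the zero $x_s$-average hypothesis to subtract a function constant along each $x_s$-fiber (you subtract $v(f^n(x_u,x_c,\tfrac12))$, the paper subtracts $\inf_{x_s}v(f^n(x_u,x_c,x_s))$), and the uniform $\tfrac12$-contraction of the $x_s$-direction together with H\"older continuity of $v$ to bound the resulting oscillation by $2^{-\theta n}\|v\|_{C^\theta}$. The two choices of subtracted function are cosmetically different but play the identical role.
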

\begin{proof}

Since the $x_s$-direction is contracting by factor $\frac{1}{2}$ under the iteration of $f$, 
for all $(x_u,x_c)\in[0,1]^2$ and all $n\geq0$ we have
\[
\sup_{x_s\in[0,1]} v( f^n(x_u,x_c,x_s))-\inf_{x_s\in[0,1]} v( f^n(x_u,x_c,x_s))\leq 2^{-\theta n}\|v\|_{C^\theta}.
\]
From this estimate and the assumption on $u$,
 we have 
\begin{align*}
&\left|\int_{[0,1]} u(x_u,x_c,x_s)v( f^n(x_u,x_c,x_s)){\rm d}x_s\right|\\&=\left|\int_{[0,1]} u(x_u,x_c,x_s)(v(f^n(x_u,x_c,x_s))-\inf_{x_s\in[0,1]} v( f^n(x_u,x_c,x_s))){\rm d}x_s\right|\\
&\leq 2^{-\theta n}\|v\|_{C^\theta}\int_{[0,1]}|u(x_u,x_c,x_s)|{\rm d}x_s
\end{align*}
for Lebesgue almost every $(x_u,x_c)\in[0,1]^2$.
Since
$\langle \cP^n u,v\rangle=\langle u,v\circ f^n \rangle$, we obtain the conclusion by
integrating both sides of the above inequality with respect to $(x_u,x_c)\in[0,1]^2$.
\end{proof}

\begin{proof}[Proof of Lemma~\ref{eq:im}]
Write $u\in L^2_0([0,1]^3)$ as $u=\sum_{\ell=0}^\infty u_\ell$, $u_\ell\in \hat H_\ell$. Then we have
$\cP_{10}u=\cP_\beta(u_1)$.
From the explicit expression of $\cP_\beta$ on $\hat{H}_1$ in Section~\ref{action-sec}, $\cP_{10}u$
 satisfies the assumption of Sublemma~\ref{lm:ave}.
 Hence the desired inequality holds.
\end{proof}

\subsection{Proof of the Main Theorem for $M=2$.}\label{pf-main}Let $(\varphi,\psi)$ be a pair of functions in $C^\theta([0,1]^3)\cap L^2_0([0,1]^3)$. We may assume $\varphi\not\equiv0$
for otherwise the desired upper bound
\eqref{eq:upperbound} clearly holds.
By Lemma~\ref{formula}, for all $n\geq1$ we have
\[\langle\cP^n\varphi,\psi\rangle=I+I\!I+I\!I\!I+I\!V,\]
where
\[\begin{split}I&=\langle\cP_*^n\varphi,\psi\rangle,\\
I\!I&=\sum_{k=0}^{n-1} \langle\cP^k\circ \cP_{10}\circ \cP_{*}^{n-k-1}\varphi,\psi\rangle,\\
I\!I\!I&=\langle\cP_{00}^n\varphi,\psi\rangle,\\
I\!V&=\sum_{k=0}^{n-1} \langle\cP^k\circ (I-\pi_0)\circ\cP_{01}\circ \cP_{00}^{n-k-1}\varphi,\psi\rangle.\end{split}\]
We estimate these four terms one by one.
 The first inequality in Lemma~\ref{upper-3d} gives
\[|I|\leq
C_1 n^{-3/2}\hat C_\varphi \|\psi\|_{C^\theta}.\]
To the term in the sum in $I\!I$,
 we apply the first inequality in Lemma~\ref{eq:im} and then the second inequality in Lemma~\ref{upper-3d} to deduce that
\begin{align*}
    |I\!I|&\le\sum_{k=0}^{n-1} 2^{-\theta k}\|\cP_{10}\circ \cP_{*}^{n-k-1}\varphi\|_{L^1} \|\psi\|_{C^\theta}\\
    &\leq\sum_{k=0}^{n-1}2^{-\theta k}\|\pi_0\circ \cP\|_{L^1}\|\cP_{*}^{n-k-1}\varphi\|_{L^1} \|\psi\|_{C^\theta}\\
&\le \sum_{k=0}^{n-1}    2^{-\theta k}\|\pi_0\circ \cP\|_{L^1}C_1 \max\{n-k-1,1\}^{-3/2} \hat C_\varphi\|\psi\|_{C^\theta}.
\end{align*}
Combining these two estimates and the elementary estimate
\[
\sum_{k=0}^{n-1}2^{-\theta k}(n-k)^{-3/2}\leq Cn^{-3/2}.
\]
we obtain
\begin{equation}\label{eq:im2}
|I|+|I\!I|\leq C_2n^{-3/2} \hat C_\varphi\|\psi\|_{C^\theta}
\end{equation}
for some constant $C_2>0$ independent of $\varphi$, $\psi$, $n$.

We next estimate the term $I\!I\!I$. It follows from the latter inequality in \eqref{lm:Ppm} for $\ell=0$ and Lemma~\ref{norm-bound-lem} that 
\begin{align*}
\|\cP_{00}^n\varphi\|_{L^1}&\leq\|\cP_{00}^n\varphi_0\|_{L^1}\leq \|\hat{\cP}_{\beta}:\hat{H}_{0, \hat{C}^\theta}\to \hat{H}_{0, \hat{C}^\theta}\|^n\cdot \|\varphi_0\|_{\theta/2}^{(0)}\\
&\leq 2^{-n}\|\varphi_0\|_{\theta/2}^{(\ell)}\leq 2^{-n}\cdot 2\cdot \|\varphi\|_{C^\theta}
\end{align*}
for all $n\geq1$, where $\|\hat{\cP}_{\beta}\colon\hat{H}_{0, \hat{C}^\theta}\to \hat{H}_{0, \hat{C}^\theta}\|$ denotes the operator norm with respect to the norms $\|\cdot\|_{\theta/2}^{(0)}$. 
Hence we obtain
\[|I\!I\!I|\leq\|\cP_{00}^n\varphi\|_{L^1}\|\psi\|_{L^\infty}\leq 2^{-n}C\|\varphi\|_{C^\theta}\|\psi\|_{C^\theta}\]
for some constant $C>0$ independent of $\varphi,\psi,n$.

In order to estimate the last term $I\!V$,
note that $\cP_{01}\circ \cP^{n-k-1}_{00}\varphi\in\hat H_1$ for $0\leq k\leq n-1$.
By \eqref{lm:Ppm} we have
\begin{equation}\label{eq:im300}\|\cP_{01}\circ \cP^{n-k-1}_{00}\varphi\|_{\theta/2}^{(1)}\leq 2^{-n+k}\|\varphi\|^{(0)}_{\theta/2}.\end{equation}
For the terms in the sum in $I\!V$, we apply the estimate \eqref{eq:im2} with $\varphi$ replaced by 
$\cP_{01}\circ \cP^{n-k-1}_{00}\varphi$, and then \eqref{eq:im300}
and Lemma~\ref{norm-bound-lem} to get
\begin{align*}
|I\!V|
&\le C_2\sum_{k=0}^{n-1}  (\max\{k,1\})^{-3/2} \|\cP_{01}\circ \cP^{n-k-1}_{00}\varphi\|_{\theta/2}^{(1)}\|\psi\|_{C^\theta}\\
&\le C_2 \sum_{k=0}^{n-1} (\max\{k,1\})^{-3/2} 2^{-(n-k)}\|\varphi\|_{\theta/2}^{(0)}\|\psi\|_{C^\theta}\\
&\le 2C_2\sum_{k=0}^{n-1}  (\max\{k,1\})^{-3/2} 2^{-(n-k)}\|\varphi\|_{C^\theta}\|\psi\|_{C^\theta}\\
&\le C n^{-3/2} \|\varphi\|_{C^\theta}\|\psi\|_{C^\theta}
\end{align*}
for some constant $C>0$ independent of $\varphi,\psi,n$.
The polynomial upper bound on correlations in \eqref{eq:upperbound}
follows from the estimates above on the terms $I$, $I\!I$, $I\!I\!I$, $I\!V$.

Finally, to prove the lower bound on correlations in \eqref{eq:lowerbound}, we take a pair $(\varphi,\psi)$ of functions in $C^\theta([0,1])\cap L^2_0([0,1])$ that are strictly increasing, and define $\hat{\varphi},\hat{\psi}\in  C^\theta([0,1]^3)\cap L^2_0([0,1]^3)$ by 
$
\hat{\varphi}(x_u,x_c,x_s)=\varphi(x_c)$, 
$\hat{\psi}(x_u,x_c,x_s)=\psi(x_c).
$
Then we have 
$\mathrm{Cor}(\hat{\varphi},\hat{\psi}\circ f^n)=\langle 
\cP_0^n\varphi,\psi\rangle.$
The desired lower bound in \eqref{eq:lowerbound} is a consequence of 
Lemma~\ref{lm:lower}.\qed

\section{Remarks on the case $M>2$}  In this last section
we explain how to modify the argument in the previous sections for the case $M=2$ to prove the Main Theorem for $M>2$. 

\subsection{Upper bound on correlations}
In the case $M=2$, we have used the Haar wavelets and proceeded with explicit computations. 
However, the Haar wavelets is actually not very necessary and we can extend all the arguments in the previous sections to the case $M>2$ at the cost of losing explicitness in computation.
For each integer $\ell\ge 0$ we consider the partition
\[
\xi_\ell=\left\{\left[\frac{k-1}{M^\ell},\frac{k}{M^\ell}\right)\colon k\in\{1,\ldots, M^\ell\}\right\}
\]
 of $[0,1]$ up to Lebesgue null subsets, and 
 let $K_\ell$ denote the set of functions which are constant on each element of $\xi_\ell$. Note that $K_0$ is the one-dimensional space spanned by the constant functions on $[0,1]$. For each $\ell\ge 1$, we define $H_\ell$ to be the $L^2$ orthogonal complement of $K_{\ell-1}$ in $K_\ell$:  
 \[H_\ell=\{u\in K_\ell\colon \langle u,v\rangle=0\text{ for all } v\in K_{\ell-1}\}.\]
  In terms of probability theory, $H_\ell$ is the space of $\xi_{\ell}$-measurable functions whose conditional expectations with respect to the partition $\xi_{\ell-1}$ vanish. 
  If $M=2$, the space $H_\ell$ is the $2^{\ell-1}$ dimensional space spanned by $\chi_{\ell,k}\in K_\ell$ for  $0\le k<2^{\ell-1}$.
We put $H_0=K_0$.
Then we have
\[
K_\ell=\bigoplus_{\ell'=0}^\ell H_{\ell'},
\]
and
\[
L^2_0([0,1])=\overline{\bigoplus_{\ell\geq1}H_\ell}.
\]
Similarly to 
Section~\ref{reduced-sec} and Section~\ref{rPF-sec},  we define an operator 
$\cP_0\colon L^1([0,1])\to L^1([0,1])$ by 
\[
\cP_0=\cP_\alpha+\cP_\beta,
\]where  
\[
\cP_\alpha u(x)=\frac{1}{2}u(Mx-k+1)\quad \text{on $\left[\frac{k-1}{M},\frac{k}{M}\right)$, $k\in\{1,\ldots,M\}$}
\]
and 
\[
\cP_\beta u(x)= \frac{1}{2M}\sum_{k=0}^{M-1}  u\left(\frac{x+k}{M}\right).
\]
Note that $\cP_0(L^2_0([0,1]))\subset L^2_0([0,1])$, $\cP_\alpha(H_\ell)\subset H_{\ell+1}$, $\cP_\beta(H_{\ell+1})\subset H_\ell$ for all $\ell\geq1$
and
$\cP_\beta(H_1)=\{0\},$
and we have the estimates corresponding to 
the inequalities in Lemma~\ref{trans-lem}. Hence we obtain the claim corresponding to 
Theorem~\ref{Th2}, by comparing the transition induced by $\cP_0$ between the components $H_\ell$ with the Markov process of the symmetric simple random walk with an absorbing wall. We then deduce the upper bound in
\eqref{eq:upperbound} from 
Theorem~\ref{Th2} in the same manner as in Section~\ref{sec:pf}. 

\subsection{Lower bound on correlations}
In order to get the lower bound \eqref{eq:lowerbound} in the Main theorem, it is enough to prove Lemma~\ref{lm:lower} in the case $M>2$. For each $\ell\ge 1$, we say a function $u\in H_{\ell}$ is {\it $\xi$-strictly increasing} 
if $u(x)<u(y)$ for two points $x<y$ on $[0,1]$ that belong to an element of $\xi_{\ell-1}$ and belong to distinct elements of $\xi_{\ell}$. The set $\mathcal{C}_{\ell}$ of $\xi$-strictly increasing functions of $H_\ell$ form a convex cone
and 
satisfies $\cP_\alpha(\mathcal{C}_\ell)\subset \mathcal{C}_{\ell+1}$,
$\cP_\beta(\mathcal{C}_{\ell+1})\subset \mathcal{C}_{\ell}$ for all $\ell\geq1$ and 
$\mathcal C_0=\{0\}$.

In order to follow the argument in the case $M=2$, instead of the $L^\infty$ norm we consider the following norm on $H_\ell$:
\[
\|\varphi\|_{*}^{(\ell)}=\max_{I\in \xi_{\ell-1}}\left(\sup_I\varphi-\inf_I\varphi\right).
\]

Let $\varphi\in C^\theta([0,1])\cap L^2_0([0,1])$.
If we write $\cP_0^n \varphi=\sum_{\ell=1}^\infty \varphi_{\ell}^{(n)}$, $\varphi_\ell^{(n)}\in H_\ell$ for $n\ge 1$, then we have  
\begin{equation}\label{transition1-2}
\|\varphi^{(n+1)}_{\ell}\|_{*}^{(\ell)}
\leq\begin{cases}\vspace{1mm}\displaystyle{\frac{1}{2} \|\varphi^{(n)}_{\ell-1}\|_{*}^{(\ell-1)} +
\frac{1}{2} \|\varphi^{(n)}_{\ell+1}\|_{*}^{(\ell+1)}}& \text{ if }\ell\geq2,\\
\displaystyle{\frac{1}{2} \|\varphi^{(n)}_{\ell+1}\|_{*}^{(\ell+1)}}
& \text{ if }\ell=1.
\end{cases}
\end{equation}
If $\varphi$ is strictly increasing on $[0,1]$ in the usual sense, then each component in the decomposition
 $\varphi=\sum_{\ell=1}^\infty \varphi_\ell$, $\varphi_\ell\in H_\ell$
is $\xi$-strictly increasing, and $\varphi_\ell^{(n)}$ is 
$\xi$-strictly increasing too.
Hence, the inequalities in \eqref{transition1-2} are actually equalities. This yields the lower bound in 
Lemma~\ref{lm:lower}.

\subsection*{Acknowledgments}
HT was supported by the JSPS KAKENHI 23K20220. MT was supported by the JSPS KAKENHI 21H00994.
 \bibliographystyle{amsplain}

\end{document}